\documentclass{article}

\usepackage{color}
\usepackage{amsthm,amssymb,amsfonts,mathrsfs}
\usepackage{amsmath}

\newcommand{\RN}[1]{%
	\textup{\uppercase\expandafter{\romannumeral#1}}%
}


\newtheorem{Theorem}{Theorem}[section]
\newtheorem{Proposition}{Proposition}[section]
\newtheorem{Lemma}{Lemma}[section]

\theoremstyle{definition}

\newtheorem{Definition}{Definition}[section]

\newtheorem{Assumptions}{Hypothesis}[section]

\def\R{{\mathbb{R}}}

\def\cU{{\mathcal{U}}}

\def\t\cU{{\widetilde{{\mathcal{U}}}}}

\newcommand\norm[1]{\left\lVert#1\right\rVert}
\newcommand{\restr}[1]{|_{#1}}

\def\ds{\displaystyle}

\title {Fourth order differential operators with interior degeneracy and generalized Wentzell boundary conditions}
\author{{{\sc Alessandro Camasta}\thanks{The author is a member of the  {\it Gruppo Nazionale per l'Analisi Ma\-te\-matica, la Probabilit\`a e le loro Applicazioni (GNAMPA)} of the Istituto Nazionale di Alta Matematica (INdAM) and a member of {\it UMI ``Modellistica Socio-Epidemiologica (MSE)''}. He is partially supported by PRIN 2017-2019 {\it Qualitative and quantitative aspects of nonlinear PDEs.}}}\\
	Department of Mathematics\\ University of Bari Aldo Moro\\
	Via
	E. Orabona 4\\ 70125 Bari - Italy\\ e-mail: alessandro.camasta@uniba.it\\
	{\sc Genni Fragnelli}\thanks{The author is a member of the  {\it Gruppo Nazionale per l'Analisi Ma\-te\-matica, la Probabilit\`a e le loro Applicazioni (GNAMPA)} of the Istituto Nazionale di Alta Matematica (INdAM) and a member of {\it UMI ``Modellistica Socio-Epidemiologica (MSE)''}. She is supported partially by the FFABR {\it Fondo per il finanziamento delle attivit\`a base di ricerca} 2017, by PRIN 2017-2019 {\it Qualitative and quantitative aspects of nonlinear PDEs} and by the DEB.HORIZON$_{-}$EU$_{-}$DM737 project 2022{\it COntrollability of PDES in the Applied Sciences (COPS).}}\\
	Department of Ecology and Biology\\ Tuscia University\\
	Largo dell'Universit\`a, 01100 Viterbo - Italy\\ e-mail: genni.fragnelli@unitus.it}

\date{}
\begin{document}

\maketitle

\vspace{0.3cm}

\centerline{ {\it  }}

\begin{abstract}
In this paper we consider the fourth order operators $A_1u:=(au'')''$ and $A_2u:=au''''$ in divergence and non divergence form, respectively, where $a:[0,1]\to\mathbb{R}_+$  degenerates in an interior point of the interval. Using the semigroup technique, under suitable assumptions on $a$, we study the generation property of these operators associated to  generalized Wentzell boundary conditions, proving the well posedness of the corresponding parabolic problems.
\end{abstract}

\noindent Keywords: Degenerate operators in divergence and non divergence form,  generalized Wentzell boundary conditions, interior degeneracy.

\noindent 2000AMS Subject Classification: Primary 47D06, 35K65; Secondary 47B25, 47N20

\section{Introduction}
In Mathematical Analysis the boundary conditions associated with a differential operator usually involve the function and its derivatives (including Dirichlet, Neumann and Robin conditions). In some cases, as in Markov Process Theory, it is natural to include boundary conditions involving the operator itself (see \cite{7} or \cite{wentzell}, for a detailed exposition). 
In particular, if $A$ denotes an elliptic operator, the parabolic problem
\begin{equation*}
	\frac{\partial u}{\partial t}-Au=0\quad\,\,\,\,\,\,\,\text{ in } \Omega\subset\mathbb{R}^n, t\ge 0,
\end{equation*}
is said to be equipped with the \textit{Wentzell boundary condition} if one demands $Au(t,x)=0$ for $x\in\partial\Omega$ and any $t\ge 0$.
In the literature a more general boundary condition which arises naturally in the context of the heat equation is the \textit{generalized Wentzell boundary condition} (GWBC)
\begin{equation}\label{general wentzell BC}
	\alpha Au(x)+\beta\frac{\partial u}{\partial n}(x)+\gamma u(x)=0,\quad\,\,\,\,\,\,\,x\in\partial\Omega,
\end{equation}
where $(\alpha,\beta,\gamma)\neq (0,0,0)$. Note that $(\alpha,\beta,\gamma)$ can depend on $x$ as well.
Surprisingly, these boundary conditions arise naturally as part of the formulation of the problem and are incorporated in the derivation of the heat equation itself. 
Additional motivation for the study of evolution equations with (GWBC) comes from their possible interpretation as evolution equations with dynamical boundary conditions (for a general view on the role of Wentzell boundary conditions we refer to \cite{coclite}). It is worth to mention that, in the case of heat equations, (GWBC) allow to take into account the action and the effect of heat sources on the boundary (see \cite{7}).
For a systematic study of the derivations and physical interpretations of Wentzell boundary conditions we refer, e.g., to \cite{7}, which covers heat and wave equations. On the other hand, for beam equations, Cahn-Hilliard equations and related models one can see, e.g., \cite{gal}, \cite{gal 1}, \cite{gal 2} and the references therein.
Due to their importance and physical interpretation, let us mention briefly other mathematical problems and contexts in which these boundary conditions appear. 
For example, if $\Omega$ is a smooth bounded domain in $\mathbb{R}^n$ and $B$ is a formally symmetric differential operator on $L^2(\Omega)$ with domain $D(B)\subset \mathcal{C}_0^{\infty}(\Omega)$, a classical problem is to find all self-adjoint extensions of $B$ (or all self-adjoint restrictions of $B^*$). This problem is solved abstractly by von Neumann and has been worked out in detail in some concrete cases. For example, if $\Omega =(0,1)$ and $B=\frac{1}{i}\frac{d}{dx}$, the self-adjoint extensions of $B$ are determined by boundary conditions and are parametrized by the unit circle in $\mathbb{C}$.
An analogous problem is to consider $B=\Delta$ with $D(B)=\mathcal{C}_0^{\infty}(\Omega)$.
 On the space $\mathcal{C}_0(\Omega)$, in which $\mathcal{C}_0^{\infty}(\Omega)$ is dense, $\overline{B}$ generates a positive contraction semigroup and an interesting question is to understand which extensions of $B$ on $\mathcal{C}(\overline{\Omega})$ have this property. In his pioneering work \cite{wentzell} Wentzell shows that such extensions are characterized by (Wentzell) boundary conditions of the form (\ref{general wentzell BC}), where $(\alpha(x),\beta(x),\gamma(x))\neq (0,0,0)$ for all $x\in\partial\Omega$ with $\alpha>0$, $\beta\ge 0$ and $\gamma\ge 0$ (Wentzell's work generalizes previous results by W. Feller in one space dimension, see, e.g., \cite{feller}).
Furthermore, other multiple applications are possible. For example, in \cite{vogt}  the authors consider a Dirichlet problem that describes the basic diffusion of particles in a locally compact space, endowed with a Radon measure. Moreover, in a very general setting, they present an abstract version of the Wentzell boundary conditions (see also \cite{mugnolo}).
In the context of Classical and Quantum Field Theory on asymptotically anti-de Sitter spacetimes (AdS) and other spacetimes with boundaries, \cite{dappiaggi} studies a massive scalar field in AdS in $d+1$ spacetime dimensions subject to (GWBC) and it is highlighted that they are dynamical boundary conditions invariant under the action of the isometry group of the AdS boundary.
The treatment of Wentzell boundary conditions in the classical and quantum field theoretic literature appears also in \cite{barbero}, where the classical mechanical system of a finite string with point masses subject to harmonic potentials in the extrema is solved. (GWBC) are also considered in \cite{zhan} in $(d+1)$-dimensional Minkowski spacetime with one or two timelike boundaries. In particular, in \cite{zhan} the author shows that the Wentzell boundary conditions ensure that the short-distance singularities of the two-point function for the boundary field has the form expected of a field living in a $d$-dimensional spacetime, contrary to other boundary conditions, for which the two-point function inherits the short-singularity of the $(d+1)$-dimensional bulk. This seems to be a very desired feature for holographic purposes.
In Biology a significant amount of interest has been devoted to the analysis of mathematical models arising in structured population dynamics and a very important problem is the choice of suitable boundary conditions for a biologically plausible and mathematically sound model.
In this field \cite{popol calsina} introduces and analyzes a structured population model, with so called distributed recruitment term and (GWBC), describing the dynamics of a population infected with a certain type of bacteria. 
We refer to \cite{popol farkas} and \cite{popol mokhtar richard} for a model of structured populations with generalized Wentzell-Robin boundary conditions. On the other hand, for more general operators one can see  \cite{popol calsina}, \cite{popol farkas}, \cite{popol mokhtar} and \cite{popol mokhtar richard}.
Finally, it is well known that degenerate parabolic equations are widely used as mathematical models in the applied sciences to describe the evolution in time of a given system. For this reason, in recent years an increasing interest has been devoted to the study of differential degenerate operators in divergence or in non divergence form. In particular,  after the new directions opened in \cite{2}, \cite{4}, great attention is given to the operators
\[
\mathcal A_1 u:= (au')'
\]
and
\[
\mathcal A_2 u:= au''
\]
with general Wentzell boundary conditions. 

Actually, in this paper we are interested in \textit{fourth order operators} since many problems that are relevant for applications are described by these operators. Among these applications we can find dealloying (corrosion processes), population dynamics, bacterial films, thin film, tumor growth, clustering of mussels and so on (see \cite[Introduction]{CF} for some detailed references). Moreover, operators of this type with suitable domains involving different boundary conditions arise in a natural way in several contexts as beam analysis and Euler-Bernoulli beam theory (see \cite[Introduction]{CF_Neumann} for some related references in this field).

The novelty of this paper is that we prove the generation property for the following \textit{degenerate} fourth order operators 
\[
A_1 u:= (au'')''
\]
and
\[
A_2 u:= au''''
\] 
equipped with (GWBC) (see the main theorems in Sections 3-4), obtaining the existence of solutions for the associated parabolic Cauchy problems. Here $a\in \mathcal C[0,1]$ is such that there exists $x_0\in (0,1)$ such that $a(x_0)=0$ and $a(x)>0$ if $x \neq x_0$.
As far as we know, this is the first paper that deals with this problem; thus we intend to fill this gap following the ideas of  \cite{favini vol spec} and \cite{4}.

The paper is organized as follows: in Section \ref{section 2} we prove some preliminary results that hold if $a$ degenerates in a general point $x_0 \in [0,1]$. In Section \ref{sezione 3} and in Section \ref{sezione 4} we assume that the degeneracy point $x_0$ belongs to $(0,1)$ and we prove that the operators $A_i$, $i=1,2$, equipped with (GWBC) are non negative and self-adjoint with dense domain, obtaining the well posedness for the associated Cauchy problems.
It is worth noting that in this paper we deal with real function spaces, but the assertions can be easily extended to the complex case.

\noindent Notations:\\
 $C$ denotes universal positive constants which are allowed to vary from line to line;\\
$'$ denotes the derivative of a function depending on the real space variable $x$.

\section{Preliminary results}\label{section 2}
In this section we recall some suitable weighted spaces and preliminary results given in \cite{CF_Neumann}, that will be crucial for the rest of the paper.
\newline
\indent
For simplicity, we distinguish between the case of a weakly degenerate function and of a strongly degenerate one.
 \subsection{The weakly degenerate case}\label{sezione 2.1}

First of all, we give the following definition on the function $a$.
\begin{Definition}
	A function $a\in \mathcal{C}[0,1]$ is said to be weakly degenerate if there exists $x_0\in [0,1]$ such that $a(x_0)=0$, $a(x)>0$ for all  $x \in [0,1]\setminus \{x_0\}$ and $\displaystyle\frac{1}{a}\in L^1(0,1)$.
\end{Definition} 
As an example of a weakly degenerate function we can take $a(x)=|x-x_0|^K$, with $0<K<1$.
\newline
\indent
In order to deal with the divergence case, for any weakly degenerate function $a\in \mathcal{C}[0,1]$, let us introduce the following weighted spaces:
\begin{equation}\label{(2.9) primo}
	\begin{aligned}
		H^i_a(0,1):=\{u\in H^{i-1}(0,1):& \; u^{(i-1)} \text{ is absolutely continuous in [0,1]},\\
		& \sqrt{a}u^{(i)}\in L^2(0,1)\},
	\end{aligned}
\end{equation}
endowed with the norms
\begin{equation}\label{normadiv}
\|u\|^2_{H^i_a(0,1)}:= \sum_{j=0}^{i-1}\|u^{(j)}\|^2_{L^2(0,1)}+ \|\sqrt{a}u^{(i)}\|^2_{L^2(0,1)} \quad \forall \; u \in H^i_a(0,1),
\end{equation}
$i=1,2;$ here  $H^0(0,1):= L^2(0,1)$ and $u^{(0)}=u$.
The following proposition holds.
\begin{Proposition}\label{equivalenza1} For  all $u \in H^i_a(0,1)$, $i=1,2$, set
\[
\|u \|_{i,a}^2:= \|u\|^2_{L^2(0,1)}+ \|\sqrt{a}u^{(i)}\|^2_{L^2(0,1)}.
\]
Then $\|u\|_{H^i_a(0,1)}$ and $\|u \|_{i,a}$ are equivalent.
\end{Proposition}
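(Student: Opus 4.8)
The plan is to reduce the statement to a single interpolation inequality and to exploit the weak-degeneracy hypothesis $1/a\in L^1(0,1)$ in an essential way. First I would dispose of the case $i=1$: there the two quantities
\[
\|u\|_{H^1_a(0,1)}^2=\|u\|_{L^2(0,1)}^2+\|\sqrt a\,u'\|_{L^2(0,1)}^2
\]
and $\|u\|_{1,a}^2$ literally coincide, so nothing has to be proved. For $i=2$ the bound $\|u\|_{2,a}\le\|u\|_{H^2_a(0,1)}$ is immediate, since $\|u\|_{2,a}^2$ is obtained from $\|u\|_{H^2_a(0,1)}^2$ by deleting the nonnegative term $\|u'\|_{L^2(0,1)}^2$. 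Hence the whole content of the proposition is the reverse estimate, namely the existence of a constant $C>0$ with
\[
\|u'\|_{L^2(0,1)}^2\le C\bigl(\|u\|_{L^2(0,1)}^2+\|\sqrt a\,u''\|_{L^2(0,1)}^2\bigr)
\qquad\forall\,u\in H^2_a(0,1).
\]

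The main step is to control the oscillation of $u'$ by the weighted second derivative. Since $u\in H^2_a(0,1)$ forces $u'$ to be absolutely continuous with $\sqrt a\,u''\in L^2(0,1)$, for any $x,y\in[0,1]$ I would write $u'(x)-u'(y)=\int_y^x u''(t)\,dt$ and estimate, by Cauchy--Schwarz together with the weak-degeneracy assumption,
\[
\int_0^1|u''(t)|\,dt=\int_0^1\frac{1}{\sqrt{a(t)}}\,\sqrt{a(t)}\,|u''(t)|\,dt
\le\Bigl\|\tfrac1a\Bigr\|_{L^1(0,1)}^{1/2}\,\|\sqrt a\,u''\|_{L^2(0,1)}=:M .
\]
This is exactly the point where $1/a\in L^1(0,1)$ enters, and it is the crux of the argument: it yields $|u'(x)-u'(y)|\le M$ for all $x,y\in[0,1]$, so $u'$ varies by at most $M$ over the whole interval.

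It then remains to pin $u'$ down at one point in terms of the $L^2$ norm of $u$. I would average the identity $u'(x)=u'(y)+\int_y^x u''$ over $y\in(0,1)$ to get $|u'(x)|\le|u(1)-u(0)|+M$, and control the boundary values via the standard trace estimate $u(0)^2,\,u(1)^2\le\|u\|_{L^2(0,1)}^2+2\|u\|_{L^2(0,1)}\|u'\|_{L^2(0,1)}$, obtained by integrating $u(0)^2=u(x)^2-2\int_0^x uu'$ (and its analogue at $1$) over $x\in(0,1)$. Taking the supremum over $x$ and using $\|u'\|_{L^2(0,1)}\le\|u'\|_{L^\infty(0,1)}$ produces an inequality of the form $\|u'\|_{L^2}\le C\|u\|_{L^2}+C\|u\|_{L^2}^{1/2}\|u'\|_{L^2}^{1/2}+2M$, after which a single application of Young's inequality absorbs the $\|u'\|_{L^2}$ term on the right and gives the claimed bound, hence the equivalence of the two norms. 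The delicate point to keep in mind is precisely that the interior degeneracy forbids estimating $\int_0^1|u''|$ by an unweighted $L^2$ norm of $u''$; the entire argument must be routed through $\|\sqrt a\,u''\|_{L^2}$ combined with $1/a\in L^1(0,1)$, which is exactly why the statement lives in the weakly degenerate setting.
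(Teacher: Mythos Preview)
Your argument is correct and shares the paper's essential idea: the crux in both is the Cauchy--Schwarz estimate
\[
\|u''\|_{L^1(0,1)}\le \Bigl\|\tfrac{1}{a}\Bigr\|_{L^1(0,1)}^{1/2}\|\sqrt{a}\,u''\|_{L^2(0,1)},
\]
which is exactly where weak degeneracy enters. The only difference lies in how the interpolation inequality $\|u'\|_{L^2}\lesssim \|u\|_{L^2}+\|u''\|_{L^1}$ is obtained. The paper invokes a ready-made result (Leoni's Theorem~7.37, quoted as Lemma~\ref{leoni}) with the choice $p=1$, $q=r=2$, yielding $\|u'\|_{L^2}\le c\,l^{-1}\|u\|_{L^2}+c\,l^{1/2}\|u''\|_{L^1}$ directly; you instead derive it by hand via the averaging identity $u'(x)=u(1)-u(0)+\int_0^1\!\int_y^x u''\,dy$, a trace bound, and a Young-inequality bootstrap. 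Your route is slightly longer but entirely self-contained, avoiding the external reference; the paper's is shorter once the black-box lemma is granted. Either way the proof closes in the same place.
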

To prove this proposition, the next result is essential.
\begin{Lemma}[\cite{leoni}, Theorem 7.37]\label{leoni}
Let $I=(a,b)$, with $a,b\in\mathbb{R}$, $a<b$, and let $1 \le p,q,r \le +\infty$ be such that
\[
\frac{1}{2q}+ \frac{1}{2p} \ge \frac{1}{r}. 
\]
Let $u \in W^{2,1}_{\text{loc}}(I)$, then there exists $c=c(p,q,r)>0$ such that
\begin{equation}\label{7.11}
\|u'\|_{L^r(I)} \le cl^{\frac{1}{r}-1-\frac{1}{q}}\|u\|_{L^q(I)}+ c l^{1-\frac{1}{p}+\frac{1}{r}}\|u''\|_{L^p(I)}
\end{equation}
for every $0<l<\mathcal L^1(I)$, where $\mathcal L^1(I)$ is the one-dimensional measure of $I$.
\end{Lemma}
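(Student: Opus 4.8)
The plan is to establish \eqref{7.11} from an exact representation of $u'$ on a window of length $l$, and then to globalize by a convolution (Young) estimate that reproduces the stated powers of $l$ automatically. The whole inequality is scale invariant under $u(x)\mapsto u(\lambda x)$, $l\mapsto l/\lambda$ (a quick dimensional check matches both exponents $\frac1r-1-\frac1q$ and $1-\frac1p+\frac1r$), so the powers of $l$ are forced and the only real content is a uniform constant.

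First I would fix a subinterval $J=(\alpha,\beta)\subset I$ of length $l$ and a nonnegative cut-off $\psi\in\mathcal C_c^\infty(J)$ with $\int_J\psi=1$. Starting from the elementary identity $u'(x)=u'(y)+\int_y^x u''(t)\,dt$, valid for $x,y\in J$ since $u\in W^{2,1}_{\mathrm{loc}}(I)$, I would multiply by $\psi(y)$, integrate in $y$ over $J$, and integrate the resulting term $\int_J\psi\,u'$ by parts, the boundary contributions vanishing because $\psi$ is compactly supported. This yields the representation
\[
u'(x)=-\int_J\psi'(y)u(y)\,dy+\int_J\psi(y)\Big(\int_y^x u''(t)\,dt\Big)\,dy,\qquad x\in J,
\]
which expresses $u'(x)$ as the sum of an integral operator acting on $u$ (through $\psi'$) and one acting on $u''$, with no further appearance of $u'$, and with both kernels supported in a band of width $l$.

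Next I would make the scaling explicit by choosing $\psi(y)=\tfrac1l\psi_0\big(\tfrac{y-x}{l}\big)$ for a fixed profile $\psi_0\in\mathcal C_c^\infty(-\tfrac12,\tfrac12)$ with $\int\psi_0=1$, and by letting the window $J=J_x$ slide with $x$ (shifting it to stay inside $I$ when $x$ lies within $l$ of $\partial I$). For interior $x$ the first operator becomes a genuine convolution $u\mapsto K*u$ with $K(z)=-\tfrac1{l^2}\psi_0'(-z/l)$, and a direct computation gives $\|K\|_{L^s(\mathbb{R})}=c\,l^{\frac1s-2}$; the second operator is convolution against a kernel bounded by $C$ and supported in $[-l,l]$, whose $L^s$ norm is $c\,l^{1/s}$. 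Young's inequality $\|K*f\|_{L^r}\le\|K\|_{L^s}\|f\|_{L^q}$, taken with $\tfrac1s=1+\tfrac1r-\tfrac1q$ for the first term and $\tfrac1s=1+\tfrac1r-\tfrac1p$ for the second, then produces exactly the factors $l^{\frac1r-1-\frac1q}$ and $l^{1-\frac1p+\frac1r}$ of \eqref{7.11}, with a constant depending only on $\psi_0$ and on $p,q,r$, and crucially with no dependence on the length of $I$. For the boundary windows the operator is no longer a convolution but still has the same band width and a uniformly bounded kernel, so a generalized Young/Schur estimate gives the same $L^q\to L^r$ and $L^p\to L^r$ bounds, which may be absorbed into $c$.

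The main obstacle is the admissibility of the Young exponents: the convolution step requires $s\ge1$, that is $r\ge q$ for the first term and $r\ge p$ for the second, whereas the hypothesis $\tfrac1{2q}+\tfrac1{2p}\ge\tfrac1r$ only guarantees $r\ge\tfrac{2pq}{p+q}$, strictly weaker than $r\ge\max\{p,q\}$. In the favourable regime $r\ge\max\{p,q\}$ (which already implies the hypothesis, since then $\tfrac1r\le\min\{\tfrac1p,\tfrac1q\}\le\tfrac1{2p}+\tfrac1{2q}$) the two estimates above close the proof at once. In the remaining borderline range one first lowers the integrability exponents on each length-$l$ window through the embeddings $L^{a}(J_x)\hookrightarrow L^{b}(J_x)$, $a\ge b$, whose constants are the powers of $l$ dictated by $|J_x|=l$; the precise role of $\tfrac1{2q}+\tfrac1{2p}\ge\tfrac1r$ is that it is exactly the condition allowing these exponents to be balanced so as to reach an admissible Young triple while, by scale invariance, reproducing the same powers of $l$. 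Carrying out this exponent bookkeeping, and verifying that the sliding-window and boundary corrections preserve uniformity in $|I|$, is the technical heart of the argument.
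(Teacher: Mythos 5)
The paper itself does not prove this lemma: it is imported verbatim from Leoni's book (Theorem 7.37), so your attempt must be measured against the standard proof there rather than against anything in the paper. Your first two steps are sound: the windowed representation $u'(x)=-\int_J\psi'u+\int_J\psi(y)\bigl(\int_y^x u''\bigr)dy$ is correct, the kernel computations $\|K\|_{L^s}=c\,l^{1/s-2}$ and $c\,l^{1/s}$ are right, and the Young exponents reproduce the powers $l^{\frac1r-1-\frac1q}$ and $l^{1-\frac1p+\frac1r}$ exactly. You also correctly identify the obstruction: Young requires $s\ge1$, i.e. $r\ge\max\{p,q\}$, which is strictly stronger than the hypothesis $\frac{1}{2q}+\frac{1}{2p}\ge\frac1r$. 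In the regime $r\ge\max\{p,q\}$ your argument is complete.

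The gap is the remaining regime, and the repair you sketch would fail as stated. Take $q=\infty$, $p=1$, $r=2$, which satisfies the hypothesis with equality. Lowering integrability exponents globally via H\"older on $I$ produces a factor $\mathcal{L}^1(I)^{1/b-1/a}$ rather than a power of $l$, destroying precisely the uniformity in $\mathcal{L}^1(I)$ that you yourself flag as crucial; lowering them per window forces you to abandon the convolution structure and sum block norms over a partition $\{J_i\}$ of $I$ into $N\approx\mathcal{L}^1(I)/l$ windows, and for $r<q$ the sum $\sum_i\|u\|^r_{L^q(J_i)}$ is \emph{not} controlled by $\|u\|^r_{L^q(I)}$: it can carry a factor $N^{1-r/q}$ (test $u=\sin(\omega x)$ on a long interval in the case above — the naive block sum for the $q$-term diverges as $\mathcal{L}^1(I)\to\infty$ while \eqref{7.11} remains true). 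The missing idea, which is the actual content of Leoni's proof, is different: one establishes the two-term local estimate on windows of \emph{every} length $\lambda\le l$ (this step needs no exponent restriction at all — a mean value argument gives a point $\xi$ with $|u'(\xi)|\le c\lambda^{-1-1/q}\|u\|_{L^q(J)}$, then one adds $\int_J|u''|$), chooses $\lambda$ adaptively window by window so that on balanced windows the bound becomes multiplicative, of the form $\|u'\|_{L^r(J_i)}\le c\,\|u\|^{1/2}_{L^q(J_i)}\|u''\|^{1/2}_{L^p(J_i)}$, and then sums across the partition by the discrete H\"older inequality, which closes without any factor of $N$ precisely because $\frac{r}{2q}+\frac{r}{2p}\ge1$ — a restatement of the hypothesis. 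So the hypothesis is not, as you suggest, the condition for "reaching an admissible Young triple" after exponent bookkeeping; it is the condition for the block sums over the partition to close, and without this product-form step your argument proves the lemma only for $r\ge\max\{p,q\}$. (For the paper's application, $p=1$, $q=r=2$, your easy regime does suffice, but that is weaker than the stated lemma.)
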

\begin{proof}[Proof of Proposition \ref{equivalenza1}]
For $i=1$ the thesis is obvious. Now, take $i=2$ and $u \in H^2_a(0,1)$. Clearly,
\[
\|u\|^2_{2,a} \le \|u\|^2_{H^2_a(0,1)}.
\]
For the other estimate, it is sufficient to prove
\begin{equation}\label{stima_der}
\|u'\|^2_{L^2(0,1)} \le C\left( \|u\|_{L^2(0,1)}^2+\|\sqrt{a}u''\|^2_{L^2(0,1)}\right),
\end{equation}
for a positive constant $C$. To this aim, observe  that $u\in W^{2,1}_{\text{loc}}(0,1)$. Thus, by Lemma \ref{leoni} with $I=(0,1)$, $p=1$, $q=2$ and $r=2$, one has
\begin{equation}\label{dis1}
\|u'\|_{L^2(0,1)} \le cl^{-1} \|u\|_{L^2(0,1)}+ cl^{\frac{1}{2}}\|u''\|_{L^1(0,1)}.
\end{equation}
Now, by the H\"older inequality,
\[
\begin{aligned}
\|u''\|_{L^1(0,1)}&= \int_0^1 \frac{\sqrt{a}|u''|}{\sqrt{a}}dx\le \left( \int_0^1 (a(u'')^2)(x)dx \right)^{\frac{1}{2}}\left(\int_0^1\frac{1}{a(x)}dx\right)^{\frac{1}{2}}\\
&= \|\sqrt{a}u''\|_{L^2(0,1)}\left\| \frac{1}{a}\right\|_{L^1(0,1)}^{\frac{1}{2}}.
\end{aligned}
\]
Using this inequality in \eqref{dis1}, one has
\[
\|u'\|^2_{L^2(0,1)} \le C\left( \|u\|_{L^2(0,1)}^2 +\|\sqrt{a}u''\|^2_{L^2(0,1)} \right)
\]
for some suitable constant $C>0$ and the thesis follows.
\end{proof}

Moreover, observe that
	\begin{equation*}
	u \in H^{2}_{a}(0, 1) \Longrightarrow u'\in H^1_{a}(0, 1).
	\end{equation*}

Using the space $H^{2}_{a}(0, 1) $, we define
\begin{equation}\label{Z}
\mathcal Z_w(0,1):=\{u\in H^2_a(0,1) : au''\in H^2(0,1)\}.
\end{equation}
\vspace{0,3cm}

In order to deal with the non divergence case, in place of $L^2(0,1)$ and $H^i_a(0,1)$, we consider the spaces
\begin{equation*}
L^2_{\frac{1}{a}}(0, 1):=\biggl \{u\in L^2(0, 1):\int_{0}^{1}\frac{u^2}{a}\,dx<+\infty \biggr \}
\end{equation*}
and
\begin{equation*}
H^i_{\frac{1}{a}}(0, 1):= L^2_{\frac{1}{a}}(0, 1)\cap H^i(0, 1),
\end{equation*}
with the respective norms defined by
\begin{equation*}
\norm{u}^2_{L^2_{\frac{1}{a}}(0, 1)}:= \int_{0}^{1}\frac{u^2}{a}\,dx \quad\forall \;u \in L^2_{\frac{1}{a}}(0, 1),
\end{equation*}
and
\begin{equation}\label{norma}
\norm{u}_{H^i_{\frac{1}{a}}(0, 1)}^2:=\norm{u}^2_{L^2_{\frac{1}{a}}(0, 1)} + \sum_{j=1}^{i}\| u^{(j)}\|^2_{L^2(0, 1)} \quad \forall \;u \in H^i_{\frac{1}{a}}(0, 1),
\end{equation}
$i=1,2$.
Observe that for all $u \in H^i_{\frac{1}{a}}(0, 1)$, one can prove that $\|u\|^2_{H^i_{\frac{1}{a}}(0,1)}$ is equivalent to the following one
\[
\|u\|_{i,\frac{1}{a}}^2:= \|u\|^2_{L^2_{\frac{1}{a}}(0,1)} +  \|u^{(i)}\|^2_{L^2(0,1)}.
\]
For $i=1$ the equivalence is obvious; for $i=2$ it follows by \cite[Chapter VIII]{brezis}. Indeed, for all $u \in H^i(0,1)$, one has that $\|u\|_{H^i(0,1)}$ is equivalent to $\|u\|_{L^2(0,1)} + \|u^{(i)}\|_{L^2(0,1)}$, $i=1,2$. However,
$\ds \|u\|_{L^2(0,1)}^2= \int_0^1\frac{u^2(x)}{a(x)}a(x)dx \le C  \int_0^1\frac{u^2(x)}{a(x)}dx=C\|u\|^2_{L^2_{\frac{1}{a}}(0,1)}$ for all $u \in L^2_{\frac{1}{a}}(0,1)$ and, in particular, for all $u\in H^i_{\frac{1}{a}}(0, 1)$. Hence $\|u\|_{i,\frac{1}{a}}$ is equivalent to $\norm{u}_{H^i_{\frac{1}{a}}(0, 1)}$, for all $u \in H^i_{\frac{1}{a}}(0, 1)$ and for $i=1,2$.

Finally, consider the space
\begin{equation}\label{W}
\mathcal W_w(0,1):=\left\{u\in H^2_{\frac{1}{a}}(0, 1): au''''\in L^2_{\frac{1}{a}}(0, 1) \right\}.
\end{equation}
Clearly, it is a trivial fact that, if $au''''\in L^2_{\frac{1}{a}}(0, 1)$ then $u''''\in L^1(0,1)$ (since $\frac{1}{a} \in L^1(0,1)$) and, by \cite[Lemma 2.1]{CF}, $u\in W^{4,1}(0,1)$.

Using the same considerations of \cite[Proposition 3.1]{CF}, the spaces $H^i_{\frac{1}{a}}(0, 1)$ and $H^i(0, 1)$, $i=1,2$, coincide algebraically and the two norms are equivalent. Indeed, for all $u \in H^i(0,1)$ it is sufficient to prove that there exists a positive constant $C$ such that
	\[
	\|u\|^2_{L^2_{\frac{1}{a}}(0,1)} \le C\left\| \frac{1}{a}\right\|_{L^1(0,1)};
	\]
but this follows immediately taking $C:= \|u\|_{\mathcal C[0,1]}^2$.
\newline
Hence, if $u \in \mathcal W_w(0,1)$, then $u \in \mathcal{C}^3[0,1]$, and, in particular, $(au^{(k)})(x_0)=0$, for all $k=0,1,2,3$, being $a(x_0)=0$ and $u^{(k)} \in \mathcal C[0,1]$, for all $k=0,1,2,3$.

In the previous spaces the following Green formulas hold.
\begin{Lemma}[\cite{CF_Neumann}, Lemmas 2.1 and 3.1]\label{lemma 2.1}
	If $a$ is weakly degenerate, then
	\begin{itemize}
		\item[(i)] for all $(u,v)\in \mathcal Z_w (0,1)  \times H^2_a(0,1)$
		\begin{equation}\label{gf0}
		\int_{0}^{1}(au'')''v\,dx=[(au'')'v]^{x=1}_{x=0}-[au''v']^{x=1}_{x=0}+\int_{0}^{1}au''v''dx;
	    \end{equation}
		\item[(ii)] for all $(u,v)\in \mathcal W_w(0,1)\times H^2_{\frac{1}{a}}(0, 1)$
		\begin{equation}\label{gf2}
		\int_{0}^{1} u''''v\,dx=[u'''v]^{x=1}_{x=0}-[u''v']^{x=1}_{x=0}+\int_{0}^{1}u''v''dx.
		\end{equation}
	\end{itemize}
\end{Lemma}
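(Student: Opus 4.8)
The plan is to establish both identities by applying the one-dimensional integration-by-parts formula twice, the only genuine work being to justify each step from the weighted regularity of the two functions and to check that the weighted interior term is integrable.

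For part (i) I would set $w:=au''$. By the definition of $\mathcal Z_w(0,1)$ we have $w\in H^2(0,1)$, hence $w\in\mathcal C^1[0,1]$ with $w'$ absolutely continuous on $[0,1]$, while $v\in H^2_a(0,1)\subset H^1(0,1)$ is absolutely continuous. Thus $w'v$ is absolutely continuous with $(w'v)'=w''v+w'v'$ a.e., and both summands lie in $L^1(0,1)$ (each is a product of two $L^2$ functions), so the fundamental theorem of calculus gives
\[
\int_0^1 w''v\,dx=[w'v]_{x=0}^{x=1}-\int_0^1 w'v'\,dx.
\]
For the second step I would write $(wv')'=w'v'+wv''$ a.e.; here $w=au''$, $v'$ is absolutely continuous, and the decisive term is $wv''=au''v''=(\sqrt a\,u'')(\sqrt a\,v'')$, which belongs to $L^1(0,1)$ by the Cauchy--Schwarz inequality since $\sqrt a\,u'',\sqrt a\,v''\in L^2(0,1)$. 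Hence $wv'$ is absolutely continuous and
\[
\int_0^1 w'v'\,dx=[wv']_{x=0}^{x=1}-\int_0^1 au''v''\,dx.
\]
Combining the two displays and recalling $w=au''$ yields exactly \eqref{gf0}.

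For part (ii) the situation is cleaner: since $u\in\mathcal W_w(0,1)$, the discussion preceding the statement gives $u\in\mathcal C^3[0,1]\cap W^{4,1}(0,1)$, so $u,u',u'',u'''$ are absolutely continuous and $u''''\in L^1(0,1)$, while $v\in H^2_{1/a}(0,1)=H^2(0,1)\subset\mathcal C^1[0,1]$ has $v''\in L^2(0,1)$. Every integrand is then integrable ($u''''v\in L^1$ because $v$ is bounded; $u'''v'$ and $u''v''$ are in $L^1$ because $u''',u''$ are bounded and $v',v''\in L^2$), and integrating by parts twice exactly as above produces \eqref{gf2}.

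The main obstacle is entirely localized at the degeneracy point $x_0$, where $u''$ and $v''$ may be unbounded, and the point to emphasize is that \emph{no interior boundary term appears at $x_0$}. In part (i) this is because the functions being differentiated, $w'v=(au'')'v$ and $wv'=au''v'$, are absolutely continuous across the whole of $[0,1]$ precisely when their a.e. derivatives are in $L^1(0,1)$; for $wv'$ this is exactly where the weighted space $H^2_a$ is used, guaranteeing via Cauchy--Schwarz that $au''v''\in L^1(0,1)$ despite the vanishing of $a$ at $x_0$. In part (ii) the degeneracy is harmless, since membership in $\mathcal W_w(0,1)$ already forces $u\in\mathcal C^3[0,1]$, making all relevant products bounded times $L^2$.
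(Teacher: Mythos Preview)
Your argument is correct. The paper does not actually give a proof of this lemma but cites \cite{CF_Neumann}; your two-step integration by parts---setting $w=au''\in H^2(0,1)$ in (i) and using $u\in W^{4,1}(0,1)\subset\mathcal C^3[0,1]$ together with $H^2_{1/a}(0,1)=H^2(0,1)$ in (ii)---is exactly the natural direct route, and the key observation that $au''v''=(\sqrt a\,u'')(\sqrt a\,v'')\in L^1(0,1)$ is what rules out any interior contribution at $x_0$. One stylistic remark: in your second step of (i) the logic reads as if integrability of $wv''$ is what makes $wv'$ absolutely continuous, whereas the absolute continuity of $wv'$ comes directly from $w\in\mathcal C^1[0,1]$ and $v'$ absolutely continuous (built into the weakly degenerate definition of $H^2_a$); the Cauchy--Schwarz estimate is then needed to split $\int_0^1(wv')'\,dx$ into the two separate integrals. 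Note also, consistently with the paper's remark following the statement, that your proof of (i) nowhere uses $1/a\in L^1(0,1)$.
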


We underline that in \cite{CF_Neumann}, to prove the previous lemma, the requirement $\frac{1}{a} \in L^1(0,1)$ is not used; actually it is sufficient to require $a\in \mathcal C[0,1]$.
\subsection{The strongly degenerate case}

In this subsection we consider another notion of degeneracy: the strongly one.
\begin{Definition}
		A function $a\in \mathcal C[0,1]$ is called strongly degenerate if there exists $x_0\in [0,1]$ such that $a(x_0)=0$, $a(x)>0$ for all $x \in [0,1]\setminus \{x_0\}$ and $\displaystyle\frac{1}{a}\notin L^1(0,1)$. 
\end{Definition}
As an example of a strongly degenerate function $a$ we can take $a(x)=|x-x_0|^K$, with $K \ge 1$.

For any strongly degenerate function $a$ let us introduce the corresponding weighted spaces:
\begin{equation}\label{space}
\begin{aligned}
H^i_a(0,1):=\{u\in H^{i-1}(0,1):& \; u^{(i-1)} \text{ is locally absolutely continuous in}\\
 & [0,1]\setminus \{x_0\} \text { and } \sqrt{a}u^{(i)}\in L^2(0,1)\},
\end{aligned}\end{equation}
equipped with the norms \eqref{normadiv}, $i=1,2$. Also in this case, adding an additional assumption on $a$, the analogous of Proposition \ref{equivalenza1} holds. 
In particular:
\begin{Assumptions}\label{Ipoaggiuntiva}
Assume that there exists $K \in [1,2)$ such that the function 
$
x \mapsto \frac{|x-x_0|^K}{a}
$
is
\begin{enumerate}
\item non increasing on the left of $x_0$ and non decreasing on the right of $x_0$, if $x_0 \in (0,1),$
\item non decreasing on the right of $0$, if $x_0=0,$
\item non increasing on the left of $1$, if $x_0=1$.
\end{enumerate}
\end{Assumptions}
Observe that the previous assumption on a is not surprising because already used
in other papers; moreover this assumption and the requirement $K \ge1$ are satisfied by the prototype that we have in mind $a(x)=|x-x_0|^K$, with $K \ge1$. More precisely, since in the previous hypothesis we require that $K<2$, then as prototype we consider $a(x)=|x-x_0|^K$, with $K \in [1,2).$
 
\begin{Proposition}\label{equivalenza2} Assume Hypothesis \ref{Ipoaggiuntiva}. For  all $u \in H^2_a(0,1)$, set
\[
\|u \|_{2,a}^2:= \|u\|^2_{L^2(0,1)}+ \|\sqrt{a}u''\|^2_{L^2(0,1)}.
\]
Then $\|u\|_{H^2_a(0,1)}$ and $\|u \|_{2,a}$ are equivalent.
\end{Proposition}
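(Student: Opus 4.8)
The plan is to reduce the equivalence to the single nontrivial estimate
\[
\|u'\|^2_{L^2(0,1)} \le C\big(\|u\|^2_{L^2(0,1)} + \|\sqrt a\, u''\|^2_{L^2(0,1)}\big),
\]
since $\|u\|_{2,a} \le \|u\|_{H^2_a(0,1)}$ is immediate. The obstruction relative to Proposition \ref{equivalenza1} is that now $1/a \notin L^1(0,1)$, so the global H\"older bound on $\|u''\|_{L^1(0,1)}$ used there is unavailable and the singularity at $x_0$ must be isolated. I would fix a small $\delta>0$ (with $x_0-\delta>0$ and $x_0+\delta<1$ in the interior case $x_0\in(0,1)$; the cases $x_0=0,1$ are analogous with one-sided splitting) and split $(0,1)$ into the regular part $(0,x_0-\delta)\cup(x_0+\delta,1)$, where $a$ is bounded below by a positive constant, and the singular part $(x_0-\delta,x_0+\delta)$.

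On the regular part one has $a \ge a_\delta>0$, hence $\|u''\|_{L^2} \le a_\delta^{-1/2}\|\sqrt a\,u''\|_{L^2}$ there and $u\in H^2$, so Lemma \ref{leoni} (exactly as in the proof of Proposition \ref{equivalenza1}) controls $\|u'\|_{L^2}$ on these subintervals by $\|u\|_{L^2}+\|\sqrt a\,u''\|_{L^2}$. Since $a$ is bounded below near $x_0\pm\delta$, $u$ is in $H^2$ there, and the embedding $H^2\hookrightarrow C^1$ furnishes the pointwise bound $|u'(x_0\pm\delta)|^2 \le C(\|u\|^2_{L^2}+\|\sqrt a\,u''\|^2_{L^2})$, which I will feed into the singular estimate.

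For the singular part I would invoke Hypothesis \ref{Ipoaggiuntiva}: the monotonicity of $x\mapsto|x-x_0|^K/a(x)$ on each side of $x_0$ yields the pointwise bound $1/a(x)\le C|x-x_0|^{-K}$ on $(x_0-\delta,x_0+\delta)$, whence $\int |x-x_0|^K(u'')^2 \le C\|\sqrt a\,u''\|^2_{L^2}$. Working on $(x_0,x_0+\delta)$ (the left side being symmetric) and writing, for $t\in(0,\delta)$ and $s=x-x_0$, $u'(x_0+t)=u'(x_0+\delta)-\int_t^\delta u''(x_0+s)\,ds$, the Cauchy--Schwarz splitting $|u''|=s^{-K/2}\cdot s^{K/2}|u''|$ gives $\big(\int_t^\delta |u''(x_0+s)|\,ds\big)^2 \le \big(\int_t^\delta s^{-K}\,ds\big)\,C\|\sqrt a\,u''\|^2_{L^2}$. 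Integrating $|u'(x_0+t)|^2$ over $t\in(0,\delta)$ and applying Fubini to $\int_0^\delta\!\int_t^\delta s^{-K}\,ds\,dt=\int_0^\delta s^{1-K}\,ds$ closes the estimate.

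The crux, and the place where the standing restriction $K<2$ enters essentially, is precisely the convergence of $\int_0^\delta s^{1-K}\,ds=\delta^{2-K}/(2-K)$, which holds exactly when $K<2$ (the borderline $K=1$ producing instead an integrable logarithm). This is the only ingredient that genuinely separates the strongly degenerate estimate from the weakly degenerate one, and it is what forces the range $K\in[1,2)$ in Hypothesis \ref{Ipoaggiuntiva}. Combining the regular and singular bounds and summing over the two sides of $x_0$ yields the claimed inequality, and hence the equivalence of the two norms.
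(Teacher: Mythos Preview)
Your argument is correct and reaches the same conclusion as the paper, but the route differs. The paper does not split the interval; instead it subtracts from $u$ the polynomial $p_1$ of degree one closest to $u$ in $L^2(0,1)$, proves via an auxiliary lemma (Lemma~\ref{lemmaequi}) that $v := u - p_1$ has at least two zeros and hence $v'(y_0)=0$ for some $y_0 \in (0,1)$, and then writes $v'(x) = \int_{y_0}^x v''$ globally on $(0,1)$. The resulting double integral $\int_0^1 \int_{y_0}^x a(t)^{-1}\,dt\,dx$ is bounded using Hypothesis~\ref{Ipoaggiuntiva} in exactly the way you handle your singular piece (the same appeal to $\int_0^{y_0} t^{1-K}\,dt < \infty$ for $K<2$), and then $\|u'\| \le \|v'\| + \|p_1'\| \le C\|\sqrt a\,u''\| + C\|u\|$ since $\|p_1'\| \le C\|p_1\| \le 2C\|u\|$. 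Your localization avoids the polynomial-subtraction lemma at the cost of a separate treatment of the regular zone and the Sobolev bootstrap for the anchor values $u'(x_0\pm\delta)$; the paper's approach is more unified but needs the extra Lemma~\ref{lemmaequi} and a case distinction on the position of $y_0$ relative to $x_0$. The decisive analytic ingredient---Hypothesis~\ref{Ipoaggiuntiva} converting $1/a$ into a multiple of $|x-x_0|^{-K}$ and the convergence of $\int s^{1-K}\,ds$ forcing $K<2$---is identical in both.
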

\begin{proof} Obviously, there exists a positive constant $C$ such that for all $u \in H^2_a(0,1)$,
\[
\|u \|_{2,a}\le C \|u\|_{H^2_a(0,1)}.
\]
Now, we will prove the other inequality.

Assume, for simplicity, $x_0=0$.
As a first step we prove that there exists a positive constant $C$ such that
\begin{equation}\label{717}
\int_0^1 (v'(x))^2dx \le C\|\sqrt{a}v''\|_{L^2(0,1)}^2,
\end{equation}
for all
\[v \in \mathcal X:= \{ v \in H^2_a(0,1): \; \exists\;  y_0 \in (0,1) \text{ such that } v'(y_0)=0\}.\]

Take $x \in (0,1]$ and $v \in \mathcal X$; then there exists $y_0 \in (0,1)$ such that $v'(y_0)=0$ and
\[
v'(x)=v'(x)-v'(y_0) = \int_{y_0}^x\frac{v''(y)\sqrt{a}}{\sqrt{a}} dy\le \|\sqrt{a}v''\|_{L^2(0,1)} \left(\int_{y_0}^x \frac{1}{a(t)}dt\right)^{\frac{1}{2}}.
\]
Hence
\[
\int_0^1 (v'(x))^2dx \le \|\sqrt{a}v''\|_{L^2(0,1)}^2 \int_0^1\left( \int_{y_0}^x \frac{1}{a(t)}dt\right)dx.
\]
Thus, it is sufficient to estimate the integral in the right hand side of the previous inequality.
To this aim, we divide the integral in the following way:
\[
\int_0^1\left( \int_{y_0}^x \frac{1}{a(t)}dt\right)dx = \int_0^{y_0}\left( \int_{y_0}^x \frac{1}{a(t)}dt\right)dx+ \int_{y_0}^1\left( \int_{y_0}^x \frac{1}{a(t)}dt\right)dx.
\]
Using the assumption on $a$, one has
\[
\int_{y_0}^1\left( \int_{y_0}^x \frac{1}{a(t)}dt\right)dx \le C \int_{y_0}^1\left( \int_{y_0}^1 \frac{1}{t^K}dt\right)dx \le \begin{cases}
C\frac{y_0^{1-K}}{a(1)(K-1)}, &K \neq 1,\\
C\frac{-\log(y_0)}{a(1)}, &K=1,
\end{cases}
\]
for a positive constant $C$.
Now, we consider the term $ \int_0^{y_0}\left( \int_{y_0}^x \frac{1}{a(t)}dt\right)dx$. Using again the assumptions on $a$ and the fact that the constant in Hypothesis \ref{Ipoaggiuntiva} is strictly less than $2$, one has
\[
\begin{aligned}
\left|\int_0^{y_0}\left( \int_{y_0}^x \frac{1}{a(t)}dt\right)dx\right|&=\left|- \int_0^{y_0}\left( \int_x^{y_0} \frac{1}{a(t)}dt\right)dx\right|= \int_0^{y_0}\left( \int_x^{y_0} \frac{1}{a(t)}dt\right)dx\\
& =\int_0^{y_0}\int_0^t\frac{1}{a(t)}dx\, dt =  \int_0^{y_0}\frac{t}{a(t)}dt= \int_0^{y_0}\frac{t^K}{a(t)t^{K-1}}dt\\&
 \le C\int_0^{y_0}\frac{1}{t^{K-1}}dt
\le C\frac{1}{2-K}y_0^{2-K}.
\end{aligned}
\]
Thus,
\[
\int_0^1\left( \int_{y_0}^x \frac{1}{a(t)}dt\right)dx\le C
\]
and
\[
\int_0^1 (v'(x))^2dx \le C\|\sqrt{a}v''\|_{L^2(0,1)}^2,
\]
for a positive constant $C$.
Now, we will prove the thesis for all $u \in H^2_a(0,1)$. To this aim, consider $u\in H^2_a(0,1)$ and let $P$ be the subspace of polynomials of degree one. Then, we can find a polynomial $p_1$ of degree one such that
\[
\|u-p_1\|_{L^2(0,1)}= \min_{p \in P}\|u-p\|_{L^2(0,1)}.
\]
Set
$v:= u -p_1$; then $v \in H^2_a(0,1)$, $v$ has at least two zeros and its derivative vanishes at least once (see Lemma \ref{lemmaequi} below). Hence $v\in \mathcal X$  and, by \eqref{717}, there exists a positive constant $C$ such that
\[
\| v'\|_{L^2(0,1)}^2=\int_0^1 (v'(x))^2dx \le C\|\sqrt{a}v''\|_{L^2(0,1)}^2.
\]
Hence
\begin{equation}\label{718}
\| u'\|_{L^2(0,1)} \le \| v'\|_{L^2(0,1)}  + \| p_1'\|_{L^2(0,1) }\le C \|\sqrt{a}v''\|_{L^2(0,1)}+ \| p_1'\|_{L^2(0,1) }.
\end{equation}
It remains to estimate $\| p_1'\|_{L^2(0,1) }$. To this aim, observe that obviously there exists  a positive constant $C$ such that
\begin{equation}\label{717 bis}
\|p_1'\|_{L^2(0,1)} \le C \|p_1\|_{L^2(0,1)}.
\end{equation}
Moreover
\[
\|v\|_{L^2(0,1)}= \|u-p_1\|_{L^2(0,1)}\le \|u\|_{L^2(0,1)}
\]
and
\begin{equation}\label{719}
\|p_1\|_{L^2(0,1)} \le \|u-p_1\|_{L^2(0,1)}+ \|u\|_{L^2(0,1)} \le 2 \|u\|_{L^2(0,1)}.
\end{equation}
Hence, by \eqref{717 bis} and \eqref{719}, it follows
\begin{equation}\label{720}
\|p_1'\|_{L^2(0,1)} \le C \|u\|_{L^2(0,1)},
\end{equation}
for a positive constant $C$.  By \eqref{718} and \eqref{720}, we obtain
\begin{equation}\label{(2.24) primo}
	\| u'\|_{L^2(0,1)} \le C \|u\|_{2,a},
\end{equation}
thus the thesis follows.

The proof in the case $x_0 \neq 0$ is similar, so we omit it.
\end{proof}
Clearly, the analogous of Proposition \ref{equivalenza1} holds in the case $i=1$  and $a$ strongly degenerate without additional assumption on the function $a$ itself.

\begin{Lemma}\label{lemmaequi}
Let $X:= L^2(0,1)\cap \mathcal{C}[0,1]$ and let $P$ be the subspace of polynomials of degree one. For all $u \in H^2_a(0,1)$, if $p_1\in P$ is such that
\[
\|u-p_1\|_{L^2(0,1)}= \min_{p \in P}\|u-p\|_{L^2(0,1)},
\]
then the function
$v:= u -p_1$ has at least two zeros.
\end{Lemma}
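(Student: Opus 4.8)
The plan is to use the variational characterization of the best $L^2$-approximation. Since $P$ is a finite-dimensional (hence closed) subspace of the Hilbert space $L^2(0,1)$, the minimizer $p_1$ is the orthogonal projection of $u$ onto $P$, and the error $v := u - p_1$ is orthogonal to $P$. Testing against the two generators $1$ and $x$ of $P$ yields the two moment conditions
\[
\int_0^1 v(x)\,dx = 0 \qquad\text{and}\qquad \int_0^1 x\,v(x)\,dx = 0.
\]
First I would record that $v$ is continuous on $[0,1]$: indeed $u\in H^2_a(0,1)\subset H^1(0,1)\hookrightarrow \mathcal C[0,1]$ and $p_1$ is a polynomial, so $v\in X$. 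If $v\equiv 0$ the statement is trivial, so from now on I assume $v\not\equiv 0$.

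Next I would argue by contradiction, supposing $v$ has at most one zero in $[0,1]$. The first moment condition $\int_0^1 v\,dx = 0$ together with continuity and $v\not\equiv 0$ already forces $v$ to change sign, hence to vanish at least once; so the only configuration left to exclude is that $v$ has a \emph{single} zero $x_1\in(0,1)$ at which it genuinely changes sign. Indeed, if the unique zero were an endpoint, or an interior point where $v$ merely touches $0$ without changing sign, then $v$ would keep a constant sign on $(0,1)$ and one would get $\int_0^1 v\,dx \neq 0$, contradicting the zeroth moment.

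To dispose of the remaining case I would exploit the second moment condition through the linear test function $q(x) := x - x_1 \in P$. Since $v$ changes sign exactly at $x_1$ and $q$ vanishes and changes sign exactly at $x_1$, the product $v(x)\,(x-x_1)$ keeps a constant sign on $[0,1]$ and is not identically zero, whence
\[
\left|\int_0^1 v(x)\,(x-x_1)\,dx\right| > 0.
\]
On the other hand, linearity of the two moment conditions gives
\[
\int_0^1 v(x)\,(x-x_1)\,dx = \int_0^1 x\,v(x)\,dx - x_1\int_0^1 v(x)\,dx = 0,
\]
a contradiction. Therefore $v$ must have at least two zeros, which is the assertion.

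I expect the only delicate point to be the careful case distinction on the nature of the single putative zero (an interior sign change versus a touching zero or an endpoint zero): the sign argument for the product $v\cdot(x-x_1)$ is clean precisely when $x_1$ is an interior sign change, and one must verify that every other configuration is already ruled out by the zeroth moment condition. Everything else is routine Hilbert-space projection theory plus the intermediate value theorem.
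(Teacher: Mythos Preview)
Your argument is correct and complete. The key observation---that $v=u-p_1$ is the $L^2$-orthogonal projection error and hence satisfies the two moment conditions $\int_0^1 v\,dx=0$ and $\int_0^1 xv\,dx=0$---together with the sign argument using the test function $x-x_1$, is the standard ``orthogonal polynomial'' technique, and your case analysis on the nature of a putative single zero is handled cleanly.

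The paper proceeds differently: rather than invoking orthogonality, it argues directly from the minimality of $\|u-p_1\|_{L^2}$ by exhibiting explicit competitors. If $v$ never vanishes it has a positive minimum $\alpha$ (after a WLOG on sign), and shifting to $p_1+\alpha$ strictly decreases the distance; if $v$ has a single zero, a small vertical shift $p_1+\epsilon$ with $\epsilon\in(0,2\int_0^1 v\,dx)$ again does better. Your route is more conceptual and immediately generalizes (approximation by degree-$n$ polynomials forces at least $n+1$ zeros of the error), and it handles the delicate sign-changing single-zero case without any fuss. The paper's approach is more hands-on and avoids the Hilbert-space projection machinery, but its second step tacitly needs $\int_0^1 v\,dx>0$, which is exactly the configuration your moment argument disposes of transparently.
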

\begin{proof}
Assume that $v(x):= u(x)-p_1(x) \neq 0$ for all $x \in [0,1]$. Without loss of generality, we can assume
$v(x) >0$. Clearly, $v(x) \ge \min_{[0,1]} (u-p_1)  =:\alpha >0$. Assume that $p
_1(x)=mx+q$, $m, q \in \R$. Then $p_1+ \alpha \in P$ and
\[
\|u-p_1\|_{L^2(0,1)} ^2-\|u-p_1-\alpha\|_{L^2(0,1)} ^2=  \alpha \int_0^1(-\alpha+2u-2mx -2q) dx.
\]
Recalling that  $u(x)-mx-q \ge \alpha$ and $\alpha >0$, we have
\[
\|u-p_1\|_{L^2(0,1)} ^2-\|u-p_1-\alpha\|_{L^2(0,1)} ^2
\ge0.
\]
But this is not possible since, by assumption, $p_1$ is such that
\[
\|u-p_1\|_{L^2(0,1)}= \min_{p \in P}\|u-p\|_{L^2(0,1)}.
\]
Now, assume that there exists only a point $y_0 \in [0,1]$ such that $v(y_0)=0$ and $v(x) \neq 0$ for all $x \neq y_0$. For $\epsilon \in \left(0, 2\int_0^1 v(x) dx\right) $, consider $g_\epsilon(x)= v(x) -\epsilon$. Then one can prove 
\[
\|g_\epsilon\|^2_{L^2(0,1)} < \|v\|^2_{L^2(0,1)}.
\]
Indeed
\[
\begin{aligned}
\|g_\epsilon\|^2_{L^2(0,1)}= \int_0^1 ( v^2 + \epsilon^2-2\epsilon v)dx <\int_0^1 v^2 dx 
 &\Leftrightarrow 
\epsilon\left( \epsilon -2\int_0^1 v\, dx\right) <0\\
& \Leftrightarrow \epsilon < 2\int_0^1 v\, dx.
\end{aligned}
\]
Again, we find $p(x):= p_1(x) +\epsilon \in P$, such that
\[
\| u-p\|_{L^2(0,1)} \le \| u-p_1\|_{L^2(0,1)}
\]
and this is not possible. Thus, the thesis follows.
\end{proof}

Also in the strongly degenerate case we consider the space
${\mathcal Z_w}(0,1)$  given in \eqref{Z}, where $H^2_a(0,1)$ is the one defined in \eqref{space}. In order to distinguish the two spaces, we use the notation $\mathcal Z_s(0,1)$ if $a$ is strongly degenerate. Thus, if $u\in {\mathcal Z_s}(0,1)$, $u'$ is locally absolutely continuous in $[0,1]\setminus \{x_0\}$ and not absolutely continuous in $[0,1]$ as for the weakly degenerate case; so equality (\ref{gf0}) is not true a priori. For this reason in \cite{CF_Neumann} we characterize  the space $\mathcal Z_s(0,1)$. In particular, we introduce the space
\begin{equation*}
	\begin{split}
	X:=\{u\in H^1(0,1):\,
	&\text{$u'$ is locally absolutely continuous in $[0,1]\setminus \{x_0\}$},\\
	&au, au' \in H^1(0,1), au''\in H^2(0,1), \sqrt{a}u''\in L^2(0,1),\\
	& (au^{(k)})(x_0)=0, \text{ for all } k=0,1,2\}.
	\end{split}
	\end{equation*}
	Using the definition of $X$ one can easily have the next property.
\begin{Lemma}\label{lemma2.2}[see \cite[Lemma 2.2]{CF_Neumann}] For all $u \in X$ we have that
	\begin{enumerate}
		\item $|a(x)u(x)| \le \|(au)'\|_{L^2(0,1)}\sqrt{|x-x_0|}$,
		\item $|a(x)u'(x)| \le \|(au')'\|_{L^2(0,1)}\sqrt{|x-x_0|}$,
		\item $|a(x)u''(x)| \le \|(au'')'\|_{L^2(0,1)}\sqrt{|x-x_0|}$
	\end{enumerate}
	for all $x \in [0,1]$.
\end{Lemma}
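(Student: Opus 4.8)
The plan is to treat the three estimates simultaneously, since they differ only in the order of the derivative. For $k=0,1,2$ set $w_k := a\,u^{(k)}$. From the definition of the space $X$ we have $au, au' \in H^1(0,1)$ and $au'' \in H^2(0,1) \subset H^1(0,1)$, so in every case $w_k \in H^1(0,1)$; in particular each $w_k$ is absolutely continuous on $[0,1]$ (via the embedding $H^1(0,1) \hookrightarrow \mathcal C[0,1]$), and $w_k' = (au^{(k)})' \in L^2(0,1)$. Moreover, the boundary requirement built into $X$ gives $w_k(x_0) = (au^{(k)})(x_0) = 0$ for $k=0,1,2$.

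Granted these facts, all three estimates follow from a single elementary computation. Fix $x \in [0,1]$. By the Fundamental Theorem of Calculus applied to the absolutely continuous function $w_k$, together with $w_k(x_0)=0$,
\[
w_k(x) = w_k(x) - w_k(x_0) = \int_{x_0}^x w_k'(t)\, dt.
\]
Applying the Cauchy--Schwarz inequality on the interval with endpoints $x_0$ and $x$, and then enlarging the domain of integration to $(0,1)$ in the $L^2$ factor, one obtains
\[
|w_k(x)| \le \left| \int_{x_0}^x 1\, dt \right|^{1/2} \left| \int_{x_0}^x (w_k'(t))^2\, dt \right|^{1/2} \le \sqrt{|x-x_0|}\,\|w_k'\|_{L^2(0,1)}.
\]
Reading this off for $k=0,1,2$ yields precisely statements (1), (2) and (3), with $w_k'$ equal to $(au)'$, $(au')'$, $(au'')'$ respectively.

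There is essentially no analytic obstacle here: the sign of $x-x_0$ is harmless because absolute values are taken, and the resulting constant is exactly $1$. The only point deserving care is the justification of the regularity and of the pointwise vanishing at $x_0$ --- that is, that each $au^{(k)}$ is a genuinely absolutely continuous function with a well-defined value $0$ at $x_0$ --- but this is exactly what the membership conditions $au, au' \in H^1(0,1)$, $au'' \in H^2(0,1)$ and $(au^{(k)})(x_0)=0$ in the definition of $X$ are designed to provide.
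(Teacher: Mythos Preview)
Your proof is correct. The paper does not give its own argument for this lemma but cites \cite[Lemma~2.2]{CF_Neumann}; the computation you carry out --- writing $w_k=au^{(k)}\in H^1(0,1)$ with $w_k(x_0)=0$, applying the Fundamental Theorem of Calculus and then Cauchy--Schwarz --- is exactly the natural (and presumably the cited) proof.
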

Thanks to the previous estimates and the fact that $\ds\frac{1}{a} \not\in L^1(0,1)$, one can prove the next characterization.
\begin{Proposition}(\cite[Proposition 2.1]{CF_Neumann})\label{Proposition 2.1}
	The spaces $X$ and $\mathcal Z_s(0,1)$ coincide.
\end{Proposition}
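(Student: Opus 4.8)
The plan is to prove the two inclusions $X \subseteq \mathcal Z_s(0,1)$ and $\mathcal Z_s(0,1) \subseteq X$ separately, the first being immediate and the second carrying all the content. For $X \subseteq \mathcal Z_s(0,1)$ I would simply read off the definitions: every $u \in X$ belongs to $H^1(0,1)$, has $u'$ locally absolutely continuous on $[0,1]\setminus\{x_0\}$ and satisfies $\sqrt a\,u'' \in L^2(0,1)$, hence $u \in H^2_a(0,1)$ in the sense of \eqref{space}; since moreover $au'' \in H^2(0,1)$ is already required in the definition of $X$, we obtain $u \in \mathcal Z_s(0,1)$ with nothing to prove.

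The substance is the reverse inclusion. Fix $u \in \mathcal Z_s(0,1)$, so that $u \in H^1(0,1)\hookrightarrow \mathcal C[0,1]$, $u'$ is locally absolutely continuous on $[0,1]\setminus\{x_0\}$, $\sqrt a\,u'' \in L^2(0,1)$ and $\psi := au'' \in H^2(0,1)\hookrightarrow \mathcal C^1[0,1]$. I would first extract the three boundary conditions at $x_0$. The equality $(au)(x_0)=0$ is free, since $u$ is continuous and $a(x_0)=0$. For $(au'')(x_0)=0$ I would argue by contradiction: because $\int_0^1 a(u'')^2\,dx = \int_0^1 \frac{(au'')^2}{a}\,dx = \|\sqrt a\,u''\|_{L^2(0,1)}^2<+\infty$, if the continuous function $\psi=au''$ did not vanish at $x_0$ it would be bounded away from $0$ near $x_0$, forcing $\int \frac1a$ to converge there and contradicting $\frac1a\notin L^1(0,1)$. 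This simultaneously yields item (3) of Lemma \ref{lemma2.2}, namely $|(au'')(x)| \le \|(au'')'\|_{L^2(0,1)}\sqrt{|x-x_0|}$, obtained by writing $au''(x)=\int_{x_0}^x (au'')'\,dt$ and applying Cauchy--Schwarz.

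Next I would transfer this information down to $u'$ and $u$. Since $\psi\in \mathcal C^1[0,1]$ with $\psi(x_0)=0$, one has $|\psi(x)| \le C|x-x_0|$ near $x_0$; combined with the lower bound $a(x)\ge c|x-x_0|^K$ that follows from the monotonicity of $x\mapsto \frac{|x-x_0|^K}{a}$ in Hypothesis \ref{Ipoaggiuntiva}, this gives $|u''(x)| = \frac{|\psi(x)|}{a(x)} \le C|x-x_0|^{1-K}$, which is integrable near $x_0$ exactly because $K<2$. Hence $u''\in L^1(0,1)$, so $u'$ admits finite one-sided limits at $x_0$ and is bounded there; as $a(x_0)=0$, this forces $(au')(x_0)=0$, the last boundary condition. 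The decay estimates just obtained, together with the behaviour of the products away from the degeneracy (where $a$ is continuous and strictly positive), then promote $au$ and $au'$ to functions of $H^1(0,1)$ satisfying items (1)--(2) of Lemma \ref{lemma2.2}, so that $u\in X$.

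The main obstacle is the analysis at the interior point $x_0\in(0,1)$: one must exploit simultaneously $\frac1a\notin L^1(0,1)$ (to kill $au''$ at $x_0$), the $\mathcal C^1$–regularity of $au''$ together with the restriction $K<2$ in Hypothesis \ref{Ipoaggiuntiva} (to make $u''$ integrable and thereby control $u'$ and $u$ near $x_0$), and the two–sided matching of the left and right behaviour at $x_0$. The truly delicate step is verifying that $au$ and $au'$ are genuinely elements of $H^1(0,1)$ across $x_0$, and not merely continuous there; this is precisely where the weighted estimates of Lemma \ref{lemma2.2} and the strong–degeneracy hypotheses are indispensable.
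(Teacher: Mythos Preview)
The present paper does not give its own proof of this proposition; it is quoted from \cite[Proposition~2.1]{CF_Neumann}, so there is no in-text argument to compare your sketch against. Your overall strategy is the natural one, and the argument that $(au'')(x_0)=0$ (from $\sqrt{a}\,u''\in L^2$ together with $\tfrac{1}{a}\notin L^1$) is correct and is indeed the starting point for the reverse inclusion.

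Two genuine gaps remain, though. First, you bring in Hypothesis~\ref{Ipoaggiuntiva} to obtain $a(x)\ge c|x-x_0|^K$ and hence $u''\in L^1(0,1)$; but Proposition~\ref{Proposition 2.1}, as stated both here and in \cite{CF_Neumann}, assumes only that $a$ is strongly degenerate, so your argument proves a weaker statement than the one claimed. Second, and more importantly, the passage ``the decay estimates \dots\ promote $au$ and $au'$ to functions of $H^1(0,1)$'' is not justified: since $a$ is merely in $\mathcal C[0,1]$, multiplying a bounded (even $W^{1,1}$) function by $a$ does not in general yield an $H^1$ function --- neither near $x_0$ nor away from it. The finiteness of the one-sided limits of $u'$ at $x_0$ gives continuity of $au'$ there with value $0$, but says nothing about the distributional derivative of $au'$ being square-integrable. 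Appealing to Lemma~\ref{lemma2.2} at this stage is circular, since that lemma is formulated for $u\in X$, precisely the membership you are trying to establish. This step is the crux of the characterization and requires the actual argument from \cite{CF_Neumann}, which the present paper does not reproduce.
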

\vspace{0,3cm}

For the non divergence case we consider the same spaces as for the weakly degenerate case but, to prove a formula similar to \eqref{gf2}, we have to characterize the space $H^2_{\frac{1}{a}}(0, 1)$.
Thus, we introduce
\begin{equation*}
	Y:=\biggl \{u\in H^2_{\frac{1}{a}}(0, 1): u(x_0)=(au')(x_0)=0 \biggr \}
\end{equation*}
and, proceeding as in \cite{CaFrRo} and \cite{FGGR} (if $x_0 \in \{0,1\}$) or in \cite{CF} (if $x_0 \in (0,1)$), one can prove the following result. 
\begin{Proposition} \label{Prop 3.2}
	If Hypothesis \ref{Ipoaggiuntiva} is satisfied, then
	\begin{equation*}
		H^2_{\frac{1}{a}}(0, 1)=Y.
	\end{equation*}
\end{Proposition}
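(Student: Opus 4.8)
The plan is to prove the two inclusions. The inclusion $Y \subseteq H^2_{\frac 1a}(0,1)$ is immediate, since $Y$ is by definition a subset of $H^2_{\frac 1a}(0,1)$. Everything therefore reduces to showing that every $u \in H^2_{\frac 1a}(0,1)$ automatically satisfies the two boundary conditions $u(x_0)=0$ and $(au')(x_0)=0$. Since $H^2_{\frac 1a}(0,1) = L^2_{\frac 1a}(0,1)\cap H^2(0,1)$, any such $u$ belongs in particular to $H^2(0,1)$, and the one-dimensional Sobolev embedding $H^2(0,1)\hookrightarrow \mathcal C^1[0,1]$ guarantees that both $u$ and $u'$ are continuous and bounded on the whole closed interval $[0,1]$; thus $u(x_0)$ and $u'(x_0)$ are well-defined finite numbers.

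First I would dispose of the condition involving the first derivative, which is in fact the easy one. Since $a\in\mathcal C[0,1]$ with $a(x_0)=0$ and $u'$ is bounded, the product $au'$ is continuous at $x_0$ and
\[
(au')(x_0)=a(x_0)\,u'(x_0)=0 .
\]
Hence this condition holds for every element of $H^2_{\frac 1a}(0,1)$ and needs no further argument (nor, in fact, any of the structure in Hypothesis \ref{Ipoaggiuntiva}).

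The heart of the matter is the condition $u(x_0)=0$, and here the strong degeneracy is essential. By definition of $L^2_{\frac 1a}(0,1)$ one has $\int_0^1 \frac{u^2}{a}\,dx<+\infty$. I would argue by contradiction: suppose $u(x_0)\neq 0$, say $|u(x_0)|=:c>0$. By continuity of $u$ there is $\delta>0$ such that $|u(x)|\ge c/2$ whenever $|x-x_0|<\delta$, and consequently
\[
\int_{x_0-\delta}^{x_0+\delta}\frac{u^2}{a}\,dx \ge \frac{c^2}{4}\int_{x_0-\delta}^{x_0+\delta}\frac{1}{a}\,dx .
\]
The key point is then that $\frac 1a\notin L^1(0,1)$ forces the right-hand integral to diverge: indeed $a$ is continuous and strictly positive on $[0,1]\setminus\{x_0\}$, so $\frac 1a$ is bounded, hence integrable, on every compact set not containing $x_0$; therefore the failure of $\frac 1a$ to belong to $L^1(0,1)$ is entirely due to the behaviour near $x_0$, i.e. $\int_{x_0-\delta}^{x_0+\delta}\frac 1a\,dx=+\infty$ for every $\delta>0$. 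This contradicts $\int_0^1 \frac{u^2}{a}\,dx<+\infty$, so $u(x_0)=0$, and hence $u\in Y$.

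I expect the only genuinely delicate step to be this last localization argument — the observation that the non-integrability of $\frac 1a$ on $(0,1)$ is necessarily concentrated at the degeneracy point — since it is exactly what separates the strongly degenerate regime (in which the characterization holds) from the weakly degenerate one (where $\frac 1a\in L^1$ and a nonzero value $u(x_0)$ is perfectly compatible with $u\in L^2_{\frac 1a}(0,1)$, so that the analogue of $Y$ would be strictly smaller). Hypothesis \ref{Ipoaggiuntiva} does not enter this direct argument; its role, as in the cited references \cite{CaFrRo}, \cite{FGGR} and \cite{CF}, is to pin down the power-type behaviour of $a$ near $x_0$ and to secure the norm equivalences used elsewhere. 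One could alternatively reproduce the statement through the pointwise estimates of those works, but the contradiction argument above seems the most economical route.
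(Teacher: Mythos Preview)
Your argument is correct. The paper does not actually give its own proof of this proposition; it simply refers to \cite{CaFrRo}, \cite{FGGR} (for $x_0\in\{0,1\}$) and \cite{CF} (for $x_0\in(0,1)$), where the characterization is obtained via pointwise estimates that exploit the monotonicity structure of $|x-x_0|^K/a$. Your route is genuinely different and more elementary: the Sobolev embedding $H^2(0,1)\hookrightarrow\mathcal C^1[0,1]$ disposes of $(au')(x_0)=0$ immediately, and the contradiction argument for $u(x_0)=0$ uses only continuity of $u$ together with the localization of the non-integrability of $1/a$ at $x_0$. In particular, as you observe, Hypothesis~\ref{Ipoaggiuntiva} plays no role in your proof --- strong degeneracy alone suffices --- and this is consistent with the paper's own remark (just after Lemma~\ref{Lemma 2.6}) that Proposition~\ref{Prop 3.2} holds under weaker assumptions than stated. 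What the references buy is a uniform framework of quantitative pointwise bounds that are reused for the Green formulas and the characterization of $\mathcal Z_s(0,1)$; what your approach buys is a self-contained two-line proof of exactly this statement. One small point worth making explicit: your neighbourhood $(x_0-\delta,x_0+\delta)$ tacitly assumes $x_0\in(0,1)$; for the boundary cases $x_0\in\{0,1\}$ covered by Hypothesis~\ref{Ipoaggiuntiva} the same argument goes through with a one-sided neighbourhood.
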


Hence,  
if Hypothesis \ref{Ipoaggiuntiva} is satisfied, we can rewrite the space $\mathcal W_s(0,1)$ defined as in \eqref{W} in the following way \[
\mathcal W_s (0,1)=\Bigl \{u\in H^2_{\frac{1}{a}}(0, 1):u(x_0)=(au')(x_0)=0 \text{ and } au''''\in L^2_{\frac{1}{a}}(0, 1) \Bigr \}.
\]
As for the weakly degenerate case, one can prove the following Green formulas:
\begin{Lemma}(\cite[Lemmas 2.3, 3.2]{CF_Neumann})\label{Lemma 2.6}
		If $a$ is strongly degenerate, then
	\begin{enumerate}
		\item equality \eqref{gf0} holds for all $(u,v)\in{\mathcal Z_s (0,1)} \times H^2_a(0,1)$;
		\item assume Hypothesis \ref{Ipoaggiuntiva}:
		
 $\star$ if $x_0\in (0,1)$, then for all $(u,v)\in \mathcal W_s (0,1)\times H^2_{\frac{1}{a}}(0, 1)$, equality \[\int_0^1 u''''vdx= [u'''v]^{x=1}_{x=0}-[u''v']^{x=1}_{x=0}+ [u''v']^{x_0^+}_{x_0^-} +\int_0^1 u''v'' dx,\]
holds. Here $u''(x_0^+)= \lim_{\delta\rightarrow 0^+} u''(x_0+\delta)$ and $u''(x_0^-)= \lim_{\delta\rightarrow 0^+} u''(x_0-\delta)$;

$\star$ if $x_0=0$, then for all $(u,v)\in \mathcal W_s (0,1)\times H^2_{\frac{1}{a}}(0, 1)$
		\begin{equation*}
		\int_{0}^{1}u''''v\,dx=u'''(1)v(1)-[u''v']^{x=1}_{x=0}+\int_{0}^{1}u''v''dx;
	\end{equation*}

$\star$	if $x_0=1$, then for all $(u,v)\in \mathcal W_s (0,1)\times H^2_{\frac{1}{a}}(0, 1)$
\begin{equation*}
	\int_{0}^{1}u''''v\,dx=-u'''(0)v(0)-[u''v']^{x=1}_{x=0}+\int_{0}^{1}u''v''dx.
\end{equation*}

\end{enumerate}
\end{Lemma}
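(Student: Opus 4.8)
The plan is to reduce everything to ordinary integration by parts on subintervals that avoid the degeneracy point $x_0$, and then to pass to the limit, controlling the boundary contributions created at $x_0$ by means of the characterizations in Proposition \ref{Proposition 2.1} and Proposition \ref{Prop 3.2}, together with the pointwise estimates of Lemma \ref{lemma2.2}.

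For the divergence identity (item (1)) I would first invoke Proposition \ref{Proposition 2.1} to replace $u\in\mathcal Z_s(0,1)$ by $u\in X$; in particular $w:=au''\in H^2(0,1)$ is globally regular with $w(x_0)=0$, while $\sqrt{a}u'',\sqrt{a}v''\in L^2(0,1)$. Since $w\in H^2(0,1)$ and $v\in H^1(0,1)$, the first integration by parts $\int_0^1 w''v\,dx=[w'v]_0^1-\int_0^1 w'v'\,dx$ is valid on all of $(0,1)$ with no splitting. For the second one I would integrate by parts on $(0,x_0-\delta)$ and $(x_0+\delta,1)$, where $a$ is bounded below and all functions are $H^2$, and let $\delta\to0$; the interior integral converges to $\int_0^1 wv''\,dx$ because $wv''=(\sqrt{a}u'')(\sqrt{a}v'')\in L^1(0,1)$. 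The only surviving interior boundary term is $w(x_0\pm\delta)v'(x_0\pm\delta)$, and I would kill it as follows: the function $g:=wv'$ is locally absolutely continuous near $x_0$ with $g'=w'v'+wv''\in L^1$, so $g$ has finite one-sided limits at $x_0$; if such a limit were nonzero, then $|v'|\ge c/|w|\ge c'/\sqrt{|x-x_0|}$ near $x_0$ by Lemma \ref{lemma2.2}, contradicting $v'\in L^2(0,1)$. Hence these limits vanish and \eqref{gf0} follows. Note that this argument uses only $v'\in L^2$ and the bound on $au''$, so no extra hypothesis on $a$ is needed, consistently with the statement.

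For the non divergence identity with $x_0\in(0,1)$ (item (2)) I would argue analogously, now using Proposition \ref{Prop 3.2}: both $u$ and $v$ lie in $H^2_{\frac{1}{a}}(0,1)=Y$, so $u,v\in H^2(0,1)$ with $u(x_0)=v(x_0)=0$, and $\sqrt{a}u''''\in L^2(0,1)$. The crucial preliminary step, and the main obstacle, is to show that $u''$ admits finite one-sided limits $u''(x_0^\pm)$. This is where Hypothesis \ref{Ipoaggiuntiva} enters: its monotonicity requirement yields $a(x)\ge c|x-x_0|^K$ near $x_0$, and since $K<2$ one has $\int (x-x_0)^2/a\,dx<+\infty$; writing $u'''(x)=u'''(x_1)-\int_x^{x_1}u''''$ for a fixed $x_1$ away from $x_0$ and estimating $u''''=(\sqrt{a}u'''')/\sqrt{a}$ by Cauchy–Schwarz then gives $u'''\in L^1$ near $x_0$, whence $u''$ has finite limits from each side. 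I would then split at $x_0$, integrate by parts twice on each side, and let $\delta\to0$: the terms $u'''(x_0\pm\delta)v(x_0\pm\delta)$ vanish by the same contradiction device as above (if a limit of $u'''v$ were nonzero, then $|u'''|\gtrsim 1/|x-x_0|\notin L^1$, using $|v|\le C|x-x_0|$ from $v(x_0)=0$), while the terms $u''(x_0\pm\delta)v'(x_0\pm\delta)$ converge to $u''(x_0^\pm)v'(x_0)$ since $v'$ is continuous. Their difference is precisely the jump $[u''v']_{x_0^-}^{x_0^+}$, and collecting the outer boundary terms at $0,1$ and the limit $\int_0^1 u''v''\,dx$ produces the stated formula.

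Finally, the endpoint cases $x_0=0$ and $x_0=1$ are handled by the same scheme with a single one-sided limit: one integrates by parts on $(\varepsilon,1)$ (respectively $(0,1-\varepsilon)$) and lets $\varepsilon\to0$. The term involving $u'''v$ at the degenerate endpoint drops out because $v(x_0)=0$, whereas the term $u''v'$ survives as the one-sided boundary value $u''(x_0)v'(x_0)$ appearing in $[u''v']_0^1$, with no interior jump produced. I expect the whole difficulty to be concentrated in the regularity step $u'''\in L^1$ near $x_0$, that is, in exploiting $K<2$ through $\int (x-x_0)^2/a\,dx<+\infty$; everything else amounts to a careful but routine passage to the limit in the integration-by-parts identity.
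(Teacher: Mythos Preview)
The paper does not give its own proof of this lemma: it is quoted verbatim from \cite[Lemmas 2.3, 3.2]{CF_Neumann}, and the remark immediately following the statement confirms that the proof lives in that reference. So there is nothing in the present paper to compare your argument against.

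That said, your plan is sound and essentially what one expects such a proof to look like. For item (1) the key point is that $w=au''\in H^2(0,1)$ with $w(x_0)=0$ (via Proposition \ref{Proposition 2.1}), so the only delicate step is killing the interior boundary term $w(x_0\pm\delta)v'(x_0\pm\delta)$; your $L^1$ argument for $(wv')'$ combined with Lemma \ref{lemma2.2} and $v'\in L^2$ does that cleanly. For item (2) you correctly identify that the real work is proving $u'''\in L^1$ near $x_0$, and your use of Hypothesis \ref{Ipoaggiuntiva} to get $a(x)\ge c|x-x_0|^K$ with $K<2$ is exactly how that bound is obtained; one small remark is that the Cauchy--Schwarz step actually gives a pointwise bound $|u'''(x)|\le C+C'|x-x_0|^{(1-K)/2}$ (or a logarithmic factor when $K=1$), which is integrable precisely because $K<2$. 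The contradiction arguments for the vanishing of $u'''(x_0^\pm)v(x_0^\pm)$ require that $u'''v$ have one-sided limits, and this follows since $(u'''v)'=u''''v+u'''v'\in L^1$ near $x_0$ (the first term because $v\in L^2_{1/a}$ and $\sqrt{a}u''''\in L^2$, the second because $u'''\in L^1$ and $v'\in C[0,1]$); you might make that step explicit. The endpoint cases $x_0\in\{0,1\}$ then follow by the same scheme with a single one-sided limit, as you say.
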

Actually, Proposition \ref{Prop 3.2} and Lemma \ref{Lemma 2.6}.$2$ are proved in \cite{CF_Neumann} under a weaker assumption (see \cite[Hypothesis 3.1]{CF_Neumann}; naturally, Hypothesis \ref{Ipoaggiuntiva} implies this assumption). However, here we consider a stronger assumption since we have to apply Lemma \ref{Lemma 2.6}.$2$ under this hypothesis.
\section{The operator in divergence form with generalized Wentzell boundary conditions and interior degeneracy}\label{sezione 3}
Let us fix $\beta_j, \gamma_j \in \mathbb{R}$ such that $\beta_j>0$ and $\gamma_j \le 0$, $j=0,1$. Consider a weakly or a strongly degenerate function $a$ and assume that the degeneracy point $x_0$ is in the {\it interior} of the domain. Let $f: [0,1] \rightarrow \R$ be a continuous function such that
\[
\int_{0}^{1} |f(x)|^2dx+\frac{a(0)|f(0)|^2}{\beta_0}+\frac{a(1)|f(1)|^2}{\beta_1} \in \R
\]
and define $X_\mu$ to be the completion of $\mathcal C[0,1]$ with respect to the norm 
$\|\cdot\|_{X_\mu}$, where
 \begin{equation*}
\norm{f}^2_{X_\mu}=\int_{0}^{1} |f(x)|^2dx+\frac{a(0)|f(0)|^2}{\beta_0}+\frac{a(1)|f(1)|^2}{\beta_1}.
\end{equation*}
By \cite{2} and \cite{4}, it follows that 
\begin{equation*}
X_\mu:= L^2([0,1],d\mu),
\end{equation*}
where
\begin{equation*}
d\mu := dx\restr{(0,1)} \oplus \frac{adS}{\beta}\restr{\{0,1\}},
\end{equation*}
$dx$ denotes the Lebesgue measure on $(0,1)$, $\beta =(\beta_0,\beta_1)$ and $\frac{adS}{\beta}$ denotes the natural Dirac measure $dS$ on $\{0,1\}$ with weight $\frac{a}{\beta}$. More precisely, $X_\mu$ is a Hilbert space with respect to the inner product given by
\begin{equation*}
\left\langle f,g \right\rangle_{X_\mu}=\int_{0}^{1} f(x)g(x)\,dx+\frac{a(0)f(0)g(0)}{\beta_0}+\frac{a(1)f(1)g(1)}{\beta_1},
\end{equation*}
where $f,g\in X_\mu$ are written as $(f\chi_{(0,1)},(f(0),f(1)))$, $(g\chi_{(0,1)},(g(0),g(1)))$. We recall that, as usual, $\chi_{(0,1)}$ denotes the characteristic function of the interval $(0,1)$.
Clearly, $H^i(0,1)\subseteq X_\mu$, $i=0,1,2$, where we recall $H^0(0,1)=L^2(0,1)$.
In particular, $\mathcal C[0,1] \subseteq X_\mu\subseteq L^2(0,1)$.
Now, define the operator in divergence form $A_1u:= (au'')''$ equipped with the following generalized Wentzell boundary conditions
\[
(GWBC)_1 \quad
A_1u(j)+(-1)^{j+1}\frac{\beta_j}{a(j)}(au'')'(j)+\gamma_ju(j)=0,\quad j=0,1,
\]
and the additional boundary conditions
\[
(BC) \quad u''(0)=u''(1)=0.
\]
We distinguish between the weakly degenerate case and the strongly degenerate one.
\vspace{0.5cm}

If $a$ is weakly degenerate we define the weighted space:
\begin{equation*}
\mathcal Z_\mu(0, 1):=\{u \in H^2_{a}(0, 1): au'' \in H^2(0,1) \text{ and } (au'')'' \in X_\mu\}.
\end{equation*}
Clearly $\mathcal Z_\mu (0,1) \subseteq \mathcal Z_w (0,1)$ and, using the definition of the space $H_a^2(0,1)$, $\mathcal Z_\mu  (0, 1)$ can be rewritten also as
\[
\begin{aligned}
\mathcal Z_\mu  (0, 1)& = \{u\in H^1(0,1):\, \text{$u'$ is absolutely continuous in [0,1]},\\ &
\qquad \sqrt{a}u'' \in L^2(0,1), au'' \in H^2(0,1), (au'')'' \in X_\mu\}.
\end{aligned}
\]

	Let us observe that  for any $(u,v)\in \mathcal Z_\mu (0, 1) \times H^2_{a}(0, 1)$ the Green formula \eqref{gf0}  holds. 

Now, let us define the domain of the operator $A_1$ through the following subspace of $Z_\mu \subseteq X_\mu$:
\begin{equation*}
\begin{split}
D_w(A_1):=\biggl \{u\in \mathcal Z_\mu  (0, 1): (GWBC)_1 \text{ and } (BC) \text{ hold}\biggr \}.
\end{split}
\end{equation*}
Hence, we can prove the following theorem:
\begin{Theorem}\label{Teorema generazione caso div debole}
The operator $A_1: D_w(A_1) \rightarrow X_\mu$ is non negative, self-adjoint with dense domain. Thus $-A_1$ generates a contraction semigroup.
\end{Theorem}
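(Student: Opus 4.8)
The plan is to realize $A_1$ as the self-adjoint operator associated with a closed, symmetric, densely defined bilinear form on $X_\mu$, and then to invoke the characterization of generators of contraction semigroups by non negative self-adjoint operators.

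First I would compute $\langle A_1 u,v\rangle_{X_\mu}$ for $u\in D_w(A_1)$ and $v\in H^2_a(0,1)$. Applying the Green formula \eqref{gf0} of Lemma \ref{lemma 2.1} and using $(BC)$ (which forces $[au''v']^{x=1}_{x=0}=0$, since $u''(0)=u''(1)=0$ and $a(0),a(1)>0$ because $x_0\in(0,1)$), the boundary term $[(au'')'v]^{x=1}_{x=0}$ produced by the integration by parts is exactly cancelled by the contributions $\frac{a(j)(A_1u)(j)v(j)}{\beta_j}$ of the inner product, once $(A_1u)(j)$ is replaced through $(GWBC)_1$. What survives is
\begin{equation*}
\langle A_1u,v\rangle_{X_\mu}=\int_0^1 au''v''\,dx-\frac{a(0)\gamma_0}{\beta_0}u(0)v(0)-\frac{a(1)\gamma_1}{\beta_1}u(1)v(1)=:\mathfrak a(u,v).
\end{equation*}
This identity is manifestly symmetric in $u$ and $v$, so $A_1$ is symmetric; moreover, since $\gamma_j\le 0$, $\beta_j>0$ and $a(j)>0$, every term of $\mathfrak a(u,u)$ is non negative, whence $\langle A_1u,u\rangle_{X_\mu}\ge 0$ and $A_1$ is non negative.

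Next I would view $\mathfrak a$ as a bilinear form with domain $V:=H^2_a(0,1)$. The space $V$ embeds continuously into $X_\mu$ (the point values $v(0),v(1)$ are controlled by $\|v\|_{H^1(0,1)}$, hence by $\|v\|_{H^2_a(0,1)}$) and is dense in $X_\mu$ (it contains $\mathcal C^2[0,1]$, which is dense in $\mathcal C[0,1]$ for the sup-norm, and the sup-norm dominates $\|\cdot\|_{X_\mu}$). For fixed $\lambda>0$ set $\mathfrak a_\lambda(u,v):=\lambda\langle u,v\rangle_{X_\mu}+\mathfrak a(u,v)$. Continuity on $V$ is immediate from Cauchy--Schwarz, while coercivity follows from $\mathfrak a_\lambda(u,u)\ge \lambda\|u\|^2_{L^2(0,1)}+\|\sqrt a\,u''\|^2_{L^2(0,1)}\ge c\,\|u\|^2_{2,a}$ combined with the norm equivalence of Proposition \ref{equivalenza1}. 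By Lax--Milgram, for every $f\in X_\mu$ there is a unique $u\in V$ with $\mathfrak a_\lambda(u,v)=\langle f,v\rangle_{X_\mu}$ for all $v\in V$.

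It remains to identify this weak solution as an element of $D_w(A_1)$ solving $(\lambda+A_1)u=f$. Testing first with $v\in\mathcal C_c^\infty(0,1)$ yields $(au'')''=f-\lambda u$ in $\mathcal D'(0,1)$; since the right-hand side lies in $L^2(0,1)$ and $au''\in L^2(0,1)$ (as $\sqrt a$ is bounded), this forces $au''\in H^2(0,1)$, so $u\in\mathcal Z_w(0,1)$ and $A_1u=f-\lambda u$ on $(0,1)$. Testing then with a general $v\in V$ and integrating by parts via \eqref{gf0} makes the interior terms cancel, leaving an identity among the four traces $v(0),v(1),v'(0),v'(1)$; activating each trace separately, the coefficients of $v'(j)$ force $u''(0)=u''(1)=0$, i.e.\ $(BC)$, and the coefficients of $v(j)$ force $(GWBC)_1$ together with $(\lambda+A_1)u(j)=f(j)$. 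Hence $u\in D_w(A_1)$ and $\lambda+A_1$ is onto $X_\mu$. A densely defined, symmetric, non negative operator whose range is all of $X_\mu$ for one $\lambda>0$ is self-adjoint (equivalently, the operator attached to the closed symmetric form $\mathfrak a$ is self-adjoint with dense domain $D_w(A_1)$), and then the spectral theorem gives that $-A_1$ generates the contraction semigroup $(e^{-tA_1})_{t\ge 0}$. I expect the last step, the identification of the domain, to be the main obstacle: one must track the Wentzell boundary terms precisely and justify that the traces $v(0),v(1),v'(0),v'(1)$ can be prescribed independently, so that each boundary condition is recovered on its own.
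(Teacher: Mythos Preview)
Your proposal is correct and follows essentially the same route as the paper: compute $\langle A_1u,v\rangle_{X_\mu}$ via the Green formula \eqref{gf0} to obtain the symmetric non negative form $\mathfrak a$, apply Lax--Milgram on $H^2_a(0,1)$ to solve the shifted problem, then identify the weak solution as an element of $D_w(A_1)$ by testing first with $\mathcal C_c^\infty(0,1)$ and then reading off $(BC)$ and $(GWBC)_1$ from the surviving boundary terms. The only cosmetic difference is that you argue density of $H^2_a(0,1)$ in $X_\mu$ directly and appeal to the form/spectral framework, whereas the paper invokes the abstract criteria of Theorems \ref{aggiunto} and \ref{densità} in the Appendix to conclude self-adjointness, density of the domain, and generation.
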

\begin{proof}
First of all we prove that $A_1$ is symmetric and non negative on $X_\mu$.
	
	\underline{$A_1$ is symmetric}: take $u, v \in D_w(A_1)$. Then $(u,v) \in \mathcal Z_\mu(0,1) \times H^2_a(0,1)$ and \eqref{gf0} holds. Consequently
	\begin{equation*}
	\begin{split}
	\left\langle A_1u, v \right\rangle_{X_\mu}
	&=\int_{0}^{1} (a u'')''v\,dx + \frac{a(0) (au'')''(0) v(0)}{\beta_0} + \frac{a(1) (au'')''(1) v(1)}{\beta_1} \\
	&=[(au'')'v]^{x=1}_{x=0}-[au''v']^{x=1}_{x=0}+\int_{0}^{1}au''v''dx\\
	&+\frac{a(0)v(0)}{\beta_0}\biggl (\frac{\beta_0}{a(0)}(au'')'(0)-\gamma_0u(0)\biggr )\\
	&-\frac{a(1)v(1)}{\beta_1}\biggl (\frac{\beta_1}{a(1)}(au'')'(1) +\gamma_1u(1)\biggr ) \\
	&=v(1)(au'')'(1)-v(0)(au'')'(0)+\int_{0}^{1}au''v''dx-\frac{\gamma_0}{\beta_0}a(0)v(0)u(0) \\
	&-\frac{\gamma_1}{\beta_1}a(1)v(1)u(1)+v(0)(au'')'(0) - v(1)(au'')'(1) \\
	&= \int_{0}^{1}au''v''dx-\frac{\gamma_0}{\beta_0}a(0)v(0)u(0)-\frac{\gamma_1}{\beta_1}a(1)v(1)u(1)\\
	&=\left\langle u, A_1v \right\rangle_{X_\mu}.
	\end{split}
	\end{equation*}
	\indent
	\underline{$A_1$ is non negative}: for any $u\in D_w(A_1)$, according to the previous calculations, one has
	\begin{equation*}
	\left\langle A_1u, u \right\rangle_{X_\mu}=\int_{0}^{1}a|u''|^2dx-\frac{\gamma_0}{\beta_0}a(0)|u(0)|^2-\frac{\gamma_1}{\beta_1}a(1)|u(1)|^2\ge 0.
	\end{equation*}
	\indent
	Now we prove that $\lambda I+A_1$ is surjective for sufficiently large $\lambda \in \R$. 
	
	\underline{$\lambda I+A_1$ is surjective}: 
	consider the space $H^2_a(0,1)$ with the inner product 
	\[(u,v)_1:= \langle u, v\rangle_{X_\mu} + \langle \sqrt{a}u'', \sqrt{a} v''\rangle_{L^2(0,1)} \quad\,\,\,\,\,\, \forall \; u,v \in H^2_a(0,1),\]
	which induces the norm
	\[
	\|u\|^2_{\circ} := \|u\|^2_{X_\mu} + \|\sqrt{a}u''\|^2_{L^2(0,1)} \quad\,\,\,\,\,\, \forall \; u \in H^2_a(0,1).
	\]
	It is well known that for all $u \in H^1(0,1)$
\begin{equation}\label{stima su modulo di u(x)}
		|u(x)| \le C \|u\|_{H^1(0,1)}, \quad \forall \; x \in [0,1]
\end{equation}
	for a  suitable positive constant $C$. Thus, thanks to \eqref{stima_der} and (\ref{stima su modulo di u(x)}), one can prove that the norm $\|\cdot\|_\circ$ is equivalent to $\|\cdot\|_{H^2_a(0,1)}$.

Moreover,  
$$
H^2_{a}(0,1) \hookrightarrow X_{\mu} \hookrightarrow (H^2_{a}(0,1))^*,
$$
where $(H^2_{a}(0,1))^*$ is the dual space of $H^2_{a}(0,1)$ with respect to $X_{\mu}$. Let $f\in X_{\mu}$ and define $F: H^2_{a}(0,1)\rightarrow \R$
such that
$$
F(v) = \int_0^1 f v\, dx + \frac{a(0) f(0) v(0)}{\beta_0} + \frac{a(1) f(1) v(1)}{\beta_1}\quad\,\,\,\,\,\,\forall \; v\in H^2_a(0,1).
$$
From $H^2_{a}(0,1)\hookrightarrow X_{\mu},$ it follows that $F\in (H^2_{a}(0,1))^*.$ Now, define
\[
L(u,v) := \lambda\int_0^1 uv\,dx + \int_0^1 au''v''dx+ \frac{(\lambda-\gamma_1)a(1)}{\beta_1}u(1)v(1)+  \frac{(\lambda-\gamma_0)a(0)}{\beta_0}u(0)v(0),
\]
for all $u,v \in H^2_a(0,1)$.
Clearly $L(u,v)$ is a continuous bilinear form. Moreover, it is coercive. Indeed, if we take $\lambda > \gamma_i$, for $i=0,1$, then
\[
\begin{aligned}
L(u,u) &= \lambda \int_0^1 u^2dx + \int_0^1 a(u'')^2dx+ \frac{(\lambda-\gamma_1)a(1)}{\beta_1}u^2(1)+  \frac{(\lambda-\gamma_0)a(0)}{\beta_0}u^2(0)\\
&\ge \alpha \|u\|_\circ^2,
\end{aligned}
\]
for all $u \in H^2_a(0,1)$, where $\alpha := \min\{\lambda,1, \lambda-\gamma_0, \lambda-\gamma_1\}$.
As a consequence, by the Lax-Milgram Theorem, there exists a unique $u\in H^2_{a}(0,1)$ such that
$$
L(u,v)= F(v)
$$
for any $v\in H^2_{a}(0,1)$. Hence, setting for simplicity $C_i:= \frac{a(i)}{\beta_i}$, $i=0,1$, the previous equality can be rewritten as
\begin{equation}\label{w1}
\begin{aligned}
&\int_0^1 au'' v''dx + (\lambda-\gamma_0) C_0 u(0)v(0)+ (\lambda-\gamma_1)C_1u(1)v(1)\\
&=\int_0^1 (f-\lambda u) v\, dx + C_0f(0) v(0) + C_1f(1) v(1).
\end{aligned}
\end{equation}
 In particular, \eqref{w1} holds  for all $v\in C^{\infty}_{c}\left(0,1\right) \subseteq  H^2_{a}(0,1)$. 
 Hence
 \[
\int_0^1 au'' v''dx =\int_0^1 (f-\lambda u) v\, dx.
 \]
 This implies that $(au'')'' = f-\lambda u $ a.e. in $(0,1)$.  Hence, using the fact that $f-\lambda u \in X_\mu \subseteq L^2(0,1)$, one has that  $(au'')'' \in X_\mu$ and $au''\in H^2(0,1)$. Thus $u \in \mathcal Z_\mu(0,1)$.

Now, we come back to \eqref{w1}. Using
\eqref{gf0}, \eqref{w1} becomes
\begin{equation}\label{weq2}
\begin{aligned}
&\int_{0}^{1}(au'')''v\,dx-[(au'')'v]^{x=1}_{x=0}+ [au''v']^{x=1}_{x=0}-\gamma_0 C_0u(0)v(0) -\gamma_1 C_1u(1)v(1)\\
& = \int_0^1 (f-\lambda u)v \, dx\, + C_0(f - \lambda u)(0)v(0)+ C_1(f - \lambda u)(1)v(1),
\end{aligned}
\end{equation}
for all $v\in H^2_{a}(0,1)$.
 Thanks to the fact that $(au'')'' = f-\lambda u $ a.e. in $(0,1)$, \eqref{weq2} becomes
\begin{equation}\label{bt}
\begin{aligned}
&-[(au'')'v]^{x=1}_{x=0}+ [au''v']^{x=1}_{x=0}-\gamma_0 C_0u(0)v(0) -\gamma_1 C_1u(1)v(1)\\
& = C_0(f - \lambda u)(0)v(0)+ C_1(f - \lambda u)(1)v(1),
\end{aligned}
\end{equation}
for all $v\in H^2_{a}(0,1)$. Recalling that $a(0)\neq 0$ and $a(1)\neq0$, we have immediately that
\[
u''(0)=u''(1)=0.
\]
Thus \eqref{bt} becomes
\[
\begin{aligned}
&-[(au'')'v](1)+ [(au'')'v](0)-\gamma_0 C_0u(0)v(0) -\gamma_1 C_1u(1)v(1)\\
&=C_0(au'')''(0)v(0)+ C_1(au'')''(1)v(1)
\end{aligned}
\]
for all $v\in H^2_{a}(0,1)$. Hence
\[
A_1u(j)+(-1)^{j+1}\frac{\beta_j}{a(j)}(au'')'(j)+\gamma_ju(j)=0,\quad j=0,1.\]
This implies that $u\in D_w(A_1)$. Thus $\lambda I+A_1$ is surjective for $\lambda$ sufficiently large.

By Theorems \ref{aggiunto} and \ref{densità} (see Section \ref{Section 5 appendix}), we know that $-A_1$ is self-adjoint with upper bound $0$, has dense domain and $(-A_1, D_w(A_1))$ generates a contraction semigroup.
\end{proof}

Thanks to Theorem \ref{Teorema generazione caso div debole}, one has that the problem 
\begin{equation}\label{problema div}
	\begin{cases}
		u_t(t,x)+A_1u(t,x)=h(t,x), &(t,x)\in (0,T) \times (0,1),\\
		A_1u(t,j)+(-1)^{j+1}\frac{\beta_j}{a(j)}(au_{xx})_{x}(t,j)+\gamma_ju(t,j)=0,&t\in (0,T), j=0,1,\\
		u_{xx}(t,0)=u_{xx}(t,1)=0, &t\in (0,T),\\
		u(0,x)=u_0(x),&x\in(0,1),
	\end{cases}
\end{equation}
is well posed in the sense of Theorem \ref{Teorema stime caso div debole} below. As a first step, we recall the following definition.
\begin{Definition}
	If $u_0 \in X_\mu$ and $h\in L^2(0,T;X_\mu)$, a function $u$ is said to be a weak solution of \eqref{problema div} if
	\begin{equation*}
		u\in \mathcal{C}\Bigl([0,T];X_\mu\Bigr )\cap L^2\Bigl (0,T;H^2_{a}(0, 1) \Bigr )
	\end{equation*}
	and
	\begin{equation*}
		\begin{split}
			&	\int_0^1u(T,x)\varphi(T,x)\,dx-\int_0^1u_0(x)\varphi(0,x)\,dx-\int_{(0,T)\times (0,1)}u(t,x)\varphi_t(t,x)\,dx\,dt\\
			&+\frac{a(1)u(T,1)\varphi(T,1)}{\beta_1}-\frac{a(1)u_0(1)\varphi(0,1)}{\beta_1}-\frac{a(1)}{\beta_1}\int_0^Tu(t,1)\varphi_t(t,1)dt\\
			&+\frac{a(0)u(T,0)\varphi(T,0)}{\beta_0}-\frac{a(0)u_0(0)\varphi(0,0)}{\beta_0}-\frac{a(0)}{\beta_0}\int_0^Tu(t,0)\varphi_t(t,0)dt \\
			&=- \int_{(0,T)\times (0,1)}a(x)u_{xx}(t,x)\varphi_{xx}(t,x)\,dx\,dt-\frac{\gamma_1}{\beta_1}\int_0^Ta(1)u(t,1)\varphi(t,1)dt\\
			&-\frac{\gamma_0}{\beta_0}\int_0^Ta(0)u(t,0)\varphi(t,0)dt+\int_{(0,T)\times (0,1)}h(t,x)\varphi(t,x)\,dx\,dt\\
			&+\int_0^T\frac{a(1)h(t,1)\varphi(t,1)}{\beta_1}dt+\int_0^T\frac{a(0)h(t,0)\varphi(t,0)}{\beta_0}dt
		\end{split}
	\end{equation*}
	for all $\varphi\in H^1(0,T;X_\mu)\cap L^2(0,T;H^2_{a}(0, 1))$.
\end{Definition}
\begin{Theorem}\label{Teorema stime caso div debole}
	For all $h \in L^2(0,T;X_\mu)$ and $u_0\in X_\mu$, there exists a unique solution
	\begin{equation*}
		u\in \mathcal{C}\Bigl([0,T];X_\mu\Bigr )\cap L^2\Bigl (0,T;H^2_{a}(0, 1) \Bigr )
	\end{equation*}
	of \eqref{problema div} such that
	\begin{equation}\label{nuova stima forma div}
		\sup_{t\in [0,T]}\Vert u(t)\Vert^2_{X_\mu}+\int_0^T\Vert u(t)\Vert^2_{H^2_{a}(0, 1)}dt\le C_T\Bigl (\Vert u_0\Vert^2_{X_\mu}+\Vert h\Vert^2_{L^2(0,T;X_\mu)} \Bigr )
	\end{equation}
	for some positive constant $C_T$. Moreover, if $h\in W^{1,1}(0,T;X_\mu)$ and $u_0\in D_w(A_1)$, then
	\begin{equation}\label{regol}
		u\in \mathcal{C}^1\Bigl([0,T];X_\mu\Bigr )\cap \mathcal{C}\Bigl ([0,T];D_w(A_1) \Bigr ).
	\end{equation}
\end{Theorem}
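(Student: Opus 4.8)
The plan is to read \eqref{problema div} as the abstract Cauchy problem $u'(t)+A_1u(t)=h(t)$, $u(0)=u_0$, in the Hilbert space $X_\mu$, and to exploit the generation result of Theorem \ref{Teorema generazione caso div debole}. Since $-A_1$ is self-adjoint, non-positive and densely defined, it generates a strongly continuous contraction semigroup $\{e^{-tA_1}\}_{t\ge 0}$ on $X_\mu$, which is moreover analytic. First I would set up the variational framework attached to $A_1$: writing $V:=H^2_a(0,1)$ and $H:=X_\mu$, the embeddings $V\hookrightarrow H\hookrightarrow V^*$ form a Gelfand triple (already recorded in the proof of Theorem \ref{Teorema generazione caso div debole}), and the bilinear form
\[
\mathcal B(u,v):=\int_0^1 au''v''\,dx-\frac{\gamma_0}{\beta_0}a(0)u(0)v(0)-\frac{\gamma_1}{\beta_1}a(1)u(1)v(1)
\]
satisfies $\langle A_1u,v\rangle_{X_\mu}=\mathcal B(u,v)$ for $u\in D_w(A_1)$, $v\in V$, by the Green formula \eqref{gf0} combined with $(GWBC)_1$ and $(BC)$. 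The form $\mathcal B$ is continuous on $V\times V$ and, because $\gamma_j\le 0$, one has $\mathcal B(u,u)+\lambda\|u\|^2_{X_\mu}\ge\min\{1,\lambda\}\,\|u\|^2_{\circ}$ for every $\lambda>0$; by the norm equivalence established through \eqref{stima_der} this is exactly the Lions coercivity condition.

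Next I would invoke the classical theory of variational parabolic problems (J.-L. Lions): for every $u_0\in X_\mu$ and $h\in L^2(0,T;X_\mu)$ there is a unique
\[
u\in\mathcal{C}([0,T];X_\mu)\cap L^2(0,T;H^2_a(0,1)),\qquad u'\in L^2(0,T;V^*),
\]
solving $\langle u'(t),v\rangle+\mathcal B(u(t),v)=\langle h(t),v\rangle_{X_\mu}$ for a.e.\ $t$ and all $v\in V$, with $u(0)=u_0$. To recover the weak formulation of the Definition preceding the statement, I would test this identity in time against $\varphi\in H^1(0,T;X_\mu)\cap L^2(0,T;H^2_a(0,1))$, integrate by parts in $t$ (which produces the terms at $t=0$ and $t=T$, \emph{including} the atomic boundary contributions carried by $d\mu$), and expand $\langle h,\varphi\rangle_{X_\mu}$ and $\mathcal B(u,\varphi)$ through the inner product of $X_\mu$; this reproduces the integral identity verbatim. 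The energy estimate \eqref{nuova stima forma div} then follows by the standard route: choose $v=u(t)$, use $\tfrac12\tfrac{d}{dt}\|u(t)\|^2_{X_\mu}=\langle u'(t),u(t)\rangle$, the coercivity of $\mathcal B$, Young's inequality on $\langle h,u\rangle_{X_\mu}$ and Gronwall's lemma, and finally integrate in $t$ to control $\int_0^T\|u\|^2_{H^2_a(0,1)}\,dt$.

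For the improved regularity \eqref{regol} I would switch to the semigroup representation
\[
u(t)=e^{-tA_1}u_0+\int_0^t e^{-(t-s)A_1}h(s)\,ds.
\]
When $u_0\in D_w(A_1)$ and $h\in W^{1,1}(0,T;X_\mu)$, the classical regularity theorem for abstract Cauchy problems governed by a generator (Pazy, Theorem 4.2.4, whose hypotheses are met precisely because $h$ is $W^{1,1}$ in time) guarantees that this mild solution is in fact a classical solution, that is $u\in\mathcal{C}^1([0,T];X_\mu)\cap\mathcal{C}([0,T];D_w(A_1))$ solving the equation pointwise in $X_\mu$; by uniqueness it coincides with the weak solution constructed above.

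The hard part will be the precise identification of the abstract variational solution with the weak solution defined through the integral identity: one must bookkeep carefully the boundary atoms of $d\mu$ at $x=0,1$ (the terms weighted by $a(0)/\beta_0$ and $a(1)/\beta_1$), verify that the Green formula \eqref{gf0} transfers the operator onto the test function without leaving spurious boundary terms thanks to $(GWBC)_1$ and $(BC)$, and justify the temporal integration by parts in the weak topology of $V^*$. Once this matching is secured, the energy estimate and the regularity statement are routine consequences of the coercivity of $\mathcal B$ and of the generation property provided by Theorem \ref{Teorema generazione caso div debole}.
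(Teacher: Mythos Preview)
Your proposal is correct and follows essentially the same route as the paper: existence, uniqueness and the regularity \eqref{regol} are obtained by invoking the standard variational/semigroup machinery (the paper cites Ball, Lions--Magenes, Cazenave--Haraux and Bensoussan et al.\ where you cite Lions and Pazy), and the energy estimate \eqref{nuova stima forma div} is derived by pairing the equation with $u$ in $X_\mu$, using the non-negativity of the form $\mathcal B$ (equivalently $\langle A_1u,u\rangle_{X_\mu}\ge 0$), Young's inequality on $\langle h,u\rangle_{X_\mu}$, Gronwall, and then integrating in time. The only cosmetic difference is that the paper first establishes the estimate for $u_0\in D_w(A_1)$ via the classical solution and extends by density, whereas you work directly at the variational level.
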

\begin{proof}
	The assertion concerning the assumption $u_0\in X_\mu$ and the regularity of the solution $u$ when $u_0\in D_w(A_1)$ is a consequence of the results in \cite{ball}, \cite[Chapter 3, Section 4, Theorem 4.1 and Remark 4.3]{lions magenes} and of \cite[Lemma 4.1.5, Proposition 4.1.6 and Proposition 4.3.9]{cazenave}, \cite[Propositions 3.2 and 3.3]{daprato}. We only need to prove (\ref{nuova stima forma div}). Let us fix $u_0\in D_w(A_1)$ and consider the corresponding weak solution $u$ is in $\mathcal{C}^1\Bigl([0,T];X_\mu\Bigr )\cap \mathcal{C}\Bigl ([0,T];D_w(A_1)\Bigr )$. Now, we multipy the equation of (\ref{problema div}) by $u$ considering the inner product in $X_\mu$; thus
	\begin{equation*}
		\begin{split}
		&\frac{1}{2}\frac{d}{dt}\Vert u(t)\Vert^2_{X_\mu}+\Vert \sqrt{a}u_{xx}(t)\Vert^2_{L^2(0,1)}-\frac{\gamma_0}{\beta_0}a(0)u^2(t,0)-\frac{\gamma_1}{\beta_1}a(1)u^2(t,1)\\
		&\le\frac{1}{2}\Vert u(t)\Vert^2_{X_\mu}+\frac{1}{2}\Vert h(t)\Vert^2_{X_\mu}.
		\end{split}
	\end{equation*}
Hence we deduce that
	\begin{equation}\label{integrating second inequality}
		\begin{split}
			&\frac{d}{dt}\Vert u(t)\Vert^2_{X_\mu}
			\le \frac{d}{dt}\Vert u(t)\Vert^2_{X_\mu}+2\Vert \sqrt{a}u_{xx}(t)\Vert^2_{X_\mu}\\
			&\le \Vert u(t)\Vert^2_{X_\mu}+\Vert h(t)\Vert^2_{X_\mu}.
		\end{split}
	\end{equation}
	By Gronwall's Lemma, for every $t\in [0,T]$ we obtain
	\begin{equation*}
		\begin{split}
			\Vert u(t)\Vert^2_{X_\mu}\le e^T\Biggl (
			&\Vert u_0\Vert^2_{X_\mu}+\Vert h\Vert^2_{L^2(0,T;X_\mu)} \Biggr).
		\end{split}
	\end{equation*}
	Thus, there exists a positive constant $C$ such that
	\begin{equation}\label{integrating second inequality II}
		\begin{split}
			\sup_{t\in [0,T]}\Vert u(t)\Vert^2_{X_\mu}\le C
			\Bigl (\Vert u_0\Vert^2_{X_\mu}+\Vert h\Vert^2_{L^2(0,T;X_\mu)} \Bigr).
		\end{split}
	\end{equation}
	Integrating the second inequality of (\ref{integrating second inequality}) over $(0,T)$ and using (\ref{integrating second inequality II}), we have
	\begin{equation*}
		\begin{split}
			\int_0^T \Vert\sqrt{a}u_{xx}(t)\Vert^2_{L^2(0,1)}dt\le C\Bigl (\Vert u_0\Vert^2_{X_\mu}+\Vert h\Vert^2_{L^2(0,T;X_\mu)} \Bigr ),
		\end{split}
	\end{equation*}
	for a suitable positive constant $C$, and the thesis follows.

Clearly, (\ref{nuova stima forma div}) and (\ref{regol}) hold also if $u_0\in H^2_a(0,1)$, since $D_w(A_1)$ is dense in $H^2_a(0,1)$.
\end{proof}
\vspace{0.5cm}

Now we assume that $a$ is strongly degenerate and we consider the operator $(A_1,D_s(A_1))$. Here $A_1$ is defined as in the weakly degenerate case and $D_s(A_1)$ is defined as $D_w(A_1)$ where we have to consider $H^2_a(0,1)$ defined in (\ref{space}) in place of $H^2_a(0,1)$ defined in (\ref{(2.9) primo}). In particular,
\begin{equation*}
\begin{split}
D_s(A_1):=\biggl \{u\in \mathcal Z_{s, \mu}  (0, 1): (GWBC)_1 \text{ and } (BC) \text{ hold}\biggr \},
\end{split}
\end{equation*}
where 
\[
\begin{aligned}
\mathcal Z_ {s, \mu}(0,1):=\{u\in H^1(0,1): & \; u' \text{ is locally absolutely continuous in} [0,1]\setminus \{x_0\}\\
 & \text { and } \sqrt{a}u''\in L^2(0,1), au''\in H^2(0,1), (au'')'' \in X_\mu\}.
\end{aligned}
\]
Clearly, the Green's formula given in Lemma \ref{Lemma 2.6} for the divergence case still holds.

Furthermore, define
\begin{equation*}
	\begin{split}
		\widetilde{X}:=\{u\in H^1(0,1):\,
		&\text{$u'$ is locally absolutely continuous in $[0,1]\setminus \{x_0\}$},\\
		&au, au' \in H^1(0,1), au''\in H^2(0,1), \sqrt{a}u''\in L^2(0,1),\\
		& (au'')'' \in X_\mu,  (au^{(k)})(x_0)=0, \text{ for all } k=0,1,2\}.
	\end{split}
\end{equation*}
In analogy with Proposition \ref{Proposition 2.1} one has that
\begin{equation*}
	\widetilde{X} = \mathcal Z_{s, \mu}(0,1).
\end{equation*}
As for the weakly degenerate case, one has the next result which contains the generation property in the strongly degenerate context.
\begin{Theorem}
	Assume Hypothesis \ref{Ipoaggiuntiva}. The operator $A_1: D_s(A_1) \rightarrow X_\mu$ is non negative, self-adjoint with dense domain. Thus $-A_1$ generates a contraction semigroup.
\end{Theorem}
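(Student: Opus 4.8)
The plan is to mirror the proof of Theorem \ref{Teorema generazione caso div debole} \emph{mutatis mutandis}, replacing the weakly degenerate tools by their strongly degenerate counterparts; the role of Hypothesis \ref{Ipoaggiuntiva} is precisely to make these counterparts available. I would first establish that $A_1$ is symmetric and non negative on $X_\mu$. Given $u,v\in D_s(A_1)$, one has $u\in\mathcal Z_{s,\mu}(0,1)\subseteq\mathcal Z_s(0,1)$ and $v\in H^2_a(0,1)$, so the divergence Green formula \eqref{gf0} is available through the first item of Lemma \ref{Lemma 2.6}; note that for the divergence form no interior boundary term at $x_0$ appears. The boundary computation at $x=0,1$ is then identical to the weakly degenerate case and yields
\[
\left\langle A_1u,v\right\rangle_{X_\mu}=\int_0^1 au''v''\,dx-\frac{\gamma_0}{\beta_0}a(0)u(0)v(0)-\frac{\gamma_1}{\beta_1}a(1)u(1)v(1)=\left\langle u,A_1v\right\rangle_{X_\mu},
\]
so $A_1$ is symmetric; taking $v=u$ and using $\beta_j>0$, $\gamma_j\le 0$, $a\ge 0$ gives $\left\langle A_1u,u\right\rangle_{X_\mu}\ge 0$.

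Next I would prove that $\lambda I+A_1$ is surjective for $\lambda$ large, again by the Lax--Milgram Theorem applied to the bilinear form $L$ on $(H^2_a(0,1),\|\cdot\|_\circ)$, where now $H^2_a(0,1)$ is the strongly degenerate space \eqref{space}. The decisive difference with the weak case is the equivalence of $\|\cdot\|_\circ$ with $\|\cdot\|_{H^2_a(0,1)}$: here it rests on Proposition \ref{equivalenza2} (which requires Hypothesis \ref{Ipoaggiuntiva}) together with \eqref{stima su modulo di u(x)} to control the boundary point values in the $X_\mu$ part. Continuity and coercivity of $L$ then follow exactly as before for $\lambda>\max\{\gamma_0,\gamma_1\}$, with coercivity constant $\alpha=\min\{\lambda,1,\lambda-\gamma_0,\lambda-\gamma_1\}$. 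Lax--Milgram produces $u\in H^2_a(0,1)$; testing against $C^\infty_c(0,1)$ gives $(au'')''=f-\lambda u$ a.e. in $(0,1)$, whence $au''\in H^2(0,1)$ and $(au'')''\in X_\mu$, so $u\in\mathcal Z_{s,\mu}(0,1)$. Through the characterization $\widetilde X=\mathcal Z_{s,\mu}(0,1)$, this membership automatically encodes the interior conditions $(au^{(k)})(x_0)=0$, $k=0,1,2$, so no separate analysis at the degeneracy point is needed. Recovering \eqref{gf0} in the variational identity and using $a(0),a(1)\neq 0$ forces $u''(0)=u''(1)=0$, i.e.\ $(BC)$, after which the surviving boundary terms deliver $(GWBC)_1$; hence $u\in D_s(A_1)$. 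Finally, Theorems \ref{aggiunto} and \ref{densità} give that $-A_1$ is self-adjoint with upper bound $0$ and dense domain, and therefore generates a contraction semigroup.

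The main obstacle is the norm equivalence underlying the Lax--Milgram step. In the strongly degenerate regime $\frac1a\notin L^1(0,1)$, so the weak-case estimate \eqref{stima_der}, which relied on $\bigl\|\frac1a\bigr\|_{L^1(0,1)}$, is no longer valid; it is exactly here that Hypothesis \ref{Ipoaggiuntiva} with $K\in[1,2)$ is invoked to salvage the bound $\|u'\|_{L^2(0,1)}\le C\|u\|_{2,a}$ furnished by Proposition \ref{equivalenza2}, on which the whole variational framework depends. A secondary point demanding care is checking that the Lax--Milgram solution genuinely lies in $\mathcal Z_{s,\mu}(0,1)$ with the correct local regularity and vanishing at $x_0$; this is precisely what the identification $\widetilde X=\mathcal Z_{s,\mu}(0,1)$ takes care of, so that the remainder of the argument reproduces the weakly degenerate proof verbatim.
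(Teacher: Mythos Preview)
Your proposal is correct and follows exactly the approach the paper itself indicates: the paper explicitly omits the proof, stating it is analogous to Theorem \ref{Teorema generazione caso div debole}, with the only new ingredient being the norm equivalence between $\|\cdot\|_\circ$ and $\|\cdot\|_{H^2_a(0,1)}$ obtained via \eqref{(2.24) primo} (i.e.\ Proposition \ref{equivalenza2}, which is where Hypothesis \ref{Ipoaggiuntiva} enters) together with \eqref{stima su modulo di u(x)}. Your identification of the role of Lemma \ref{Lemma 2.6}.1 for the Green formula without interior terms and of $\widetilde X=\mathcal Z_{s,\mu}(0,1)$ for the conditions at $x_0$ makes the outline more explicit than the paper, but the route is the same.
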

The proof of this theorem is analogous to the one of Theorem \ref{Teorema generazione caso div debole}, so we omit it. In any case we underline that, thanks to (\ref{(2.24) primo}) and (\ref{stima su modulo di u(x)}),  the two norms $\|\cdot\|_{\circ}$ and $\|\cdot\|_{H^2_a(0,1)}$ are equivalent. Thus we can prove that $\lambda I+A_1$ is surjective, for $\lambda$ sufficiently large, and the analogous of Theorem \ref{Teorema stime caso div debole} holds for (\ref{problema div}) if $a$ is strongly degenerate.

\section{The operator in non divergence form with generalized Wentzell boundary conditions and interior degeneracy}\label{sezione 4}
As in the  previous section, let us fix $\beta_j$, $\gamma_j\in\mathbb{R}$ such that $\beta_j>0$, $\gamma_j\le 0$, $j=0,1$. Consider a weakly degenerate function $a$ and assume that the degeneracy point $x_0$ belongs to $(0,1)$. Let $f: [0,1] \rightarrow \R$ be a continuous function such that
\[
\int_{0}^{1} \frac{|f(x)|^2}{a}dx+\frac{|f(0)|^2}{\beta_0}+\frac{|f(1)|^2}{\beta_1} \in \R
\]
and define $Y_\mu$ to be the completion of $\mathcal C[0,1]$ with respect to the norm $\|\cdot\|_{Y_\mu}$, where
\begin{equation*}
	\norm{f}^2_{Y_\mu}=\int_{0}^{1} \frac{|f(x)|^2}{a}dx+\frac{|f(0)|^2}{\beta_0}+\frac{|f(1)|^2}{\beta_1}.
\end{equation*}
By \cite{2} and \cite{4}, it follows that 
\begin{equation*}
Y_\mu := L^2_{\frac{1}{a}}([0,1],d\mu),
\end{equation*}
where 
\begin{equation*}
d\mu := \frac{dx}{a}\restr{(0,1)} \oplus \frac{dS}{\beta}\restr{\{0,1\}}.
\end{equation*}
As before, $dx$ denotes the Lebesgue measure on $(0,1)$, $\beta =(\beta_0,\beta_1)$ and $\frac{dS}{\beta}$ denotes the natural Dirac measure $dS$ on $\{0,1\}$ with weight $\frac{1}{\beta}$. In this way $Y_\mu$ becomes a Hilbert space with the inner product given by 
\begin{equation*}
\left\langle f,g \right\rangle_{Y_\mu}=\int_{0}^{1}\frac{f(x)g(x)}{a}\,dx+\frac{f(0)g(0)}{\beta_0}+\frac{f(1)g(1)}{\beta_1},
\end{equation*}
in which $f,g \in Y_\mu$ are written as in Section \ref{sezione 3}.
\newline
\indent
Clearly, $H^i_{\frac{1}{a}}(0,1)\subseteq Y_\mu$, $i=1,2$; in particular, $\mathcal C[0,1] \subseteq Y_\mu \subseteq L^2_{\frac{1}{a}}(0,1)$.

Now we introduce the operator in non divergence form $A_2u:=au''''$ equipped with the general Wentzell boundary conditions
\[
(GWBC)_2 \quad
		A_2u(j)+(-1)^{j+1}\beta_ju'''(j)+\gamma_ju(j)=0,\quad\,\,\, j=0,1,
\]
and the additional boundary conditions $(BC)$. Moreover, we consider the weighted space: 
\begin{equation*}
	\mathcal W_\mu(0, 1):=\Bigl \{u \in H^2_{\frac{1}{a}}(0, 1): au'''' \in Y_\mu\Bigr \}.
\end{equation*}
Now, since $au'''' \in Y_\mu$ implies $au'''' \in L^2_{\frac{1}{a}}(0,1)$, the same considerations made before Lemma \ref{lemma 2.1} hold; in particular, if $u \in 	\mathcal W_\mu(0,1)$, then
$(au^{(k)})(x_0)=0, k=0,1,2,3.$ 
Moreover, observe that, for any $(u,v)\in \mathcal W_\mu (0, 1) \times H^2_{\frac{1}{a}}(0, 1)$, the Green's formula \eqref{gf2} holds and $\mathcal W_\mu (0,1) \subseteq \mathcal W_w (0,1)$.

Now, let us define the domain of the operator $A_2$ through the following subspace of $W_\mu \subseteq Y_\mu$:
\begin{equation*}
	\begin{split}
		D_w(A_2):=\Bigl \{u\in \mathcal W_\mu  (0, 1): (GWBC)_2 \text{ and } (BC) \text{ hold}\Bigr \}.
	\end{split}
\end{equation*}
Hence, we can prove the following result that establishes the main properties of this operator.
\begin{Theorem}\label{teorema generazione semigruppo caso non div}
	If $a$ is weakly degenerate, then the operator $(A_2,D_w(A_2))$ is self-adjoint and non negative on $Y_\mu$ with dense domain. Thus $-A_2$ generates a contraction semigroup.
\end{Theorem}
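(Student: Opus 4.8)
The plan is to mirror the proof of Theorem \ref{Teorema generazione caso div debole} exactly, replacing the divergence-form Green formula \eqref{gf0} by the non-divergence-form formula \eqref{gf2}, the space $H^2_a(0,1)$ by $H^2_{\frac{1}{a}}(0,1)$, and the inner product of $X_\mu$ by that of $Y_\mu$. Concretely, I would first show $A_2$ is symmetric and non negative on $Y_\mu$. Given $u,v\in D_w(A_2)$, one has $(u,v)\in\mathcal W_\mu(0,1)\times H^2_{\frac{1}{a}}(0,1)$, so \eqref{gf2} applies; expanding
\[
\langle A_2u,v\rangle_{Y_\mu}=\int_0^1\frac{au''''v}{a}\,dx+\frac{(au'''')(0)v(0)}{\beta_0}+\frac{(au'''')(1)v(1)}{\beta_1}=\int_0^1 u''''v\,dx+u'''(0)v(0)+u'''(1)v(1)
\]
and substituting $(GWBC)_2$ for the boundary values of $u'''(j)$, the boundary terms $[u'''v]_0^1$ produced by \eqref{gf2} cancel exactly as in the divergence case, leaving
\[
\langle A_2u,v\rangle_{Y_\mu}=\int_0^1 u''v''\,dx-\frac{\gamma_0}{\beta_0}u(0)v(0)-\frac{\gamma_1}{\beta_1}u(1)v(1),
\]
which is symmetric in $u,v$; the $[u''v']_0^1$ terms vanish because of the additional conditions $(BC)$, i.e. $u''(0)=u''(1)=0$. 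Setting $v=u$ and using $\gamma_j\le 0$, $\beta_j>0$ gives $\langle A_2u,u\rangle_{Y_\mu}\ge 0$, so $A_2$ is non negative.

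Next I would prove that $\lambda I+A_2$ is surjective onto $Y_\mu$ for $\lambda$ large. I would equip $H^2_{\frac{1}{a}}(0,1)$ with the inner product $(u,v)\mapsto\langle u,v\rangle_{Y_\mu}+\langle u'',v''\rangle_{L^2(0,1)}$, inducing the norm $\|u\|_\circ^2:=\|u\|_{Y_\mu}^2+\|u''\|_{L^2(0,1)}^2$, and check that $\|\cdot\|_\circ$ is equivalent to $\|\cdot\|_{H^2_{\frac{1}{a}}(0,1)}$ using the equivalence of $\|\cdot\|_{H^2_{\frac{1}{a}}(0,1)}$ with $\|u\|_{L^2_{\frac{1}{a}}(0,1)}+\|u''\|_{L^2(0,1)}$ established earlier and the trace estimate \eqref{stima su modulo di u(x)}. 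For $f\in Y_\mu$ I define $F(v)=\int_0^1\frac{f v}{a}\,dx+\frac{f(0)v(0)}{\beta_0}+\frac{f(1)v(1)}{\beta_1}$, which lies in $(H^2_{\frac{1}{a}}(0,1))^*$ by the continuous embedding $H^2_{\frac{1}{a}}(0,1)\hookrightarrow Y_\mu$, and the bilinear form
\[
L(u,v):=\lambda\int_0^1\frac{uv}{a}\,dx+\int_0^1 u''v''\,dx+\frac{(\lambda-\gamma_0)}{\beta_0}u(0)v(0)+\frac{(\lambda-\gamma_1)}{\beta_1}u(1)v(1).
\]
For $\lambda>\gamma_j$ this is continuous and coercive ($L(u,u)\ge\alpha\|u\|_\circ^2$ with $\alpha=\min\{\lambda,1,\lambda-\gamma_0,\lambda-\gamma_1\}$), so Lax--Milgram yields a unique weak solution $u$.

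Finally I would run the usual bootstrap to identify the domain: testing $L(u,v)=F(v)$ against $v\in C_c^\infty(0,1)$ gives $u''''=\frac{1}{a}(f-\lambda u)$ a.e., whence $au''''=f-\lambda u\in Y_\mu$ and $u\in\mathcal W_\mu(0,1)$; returning to general $v\in H^2_{\frac{1}{a}}(0,1)$ and integrating by parts via \eqref{gf2}, the interior integrals cancel and the surviving boundary identity forces first $u''(0)=u''(1)=0$ (since $\beta_j\ne 0$) and then $(GWBC)_2$, so $u\in D_w(A_2)$. Having shown $A_2$ symmetric, non negative and $\lambda I+A_2$ surjective, I would invoke Theorems \ref{aggiunto} and \ref{densità} to conclude that $A_2$ is self-adjoint with dense domain and that $-A_2$ generates a contraction semigroup. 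The main obstacle I anticipate is bookkeeping the boundary terms correctly: one must verify that the weighted inner product of $Y_\mu$ combines with $(GWBC)_2$ to cancel the $u'''$ boundary contributions from \eqref{gf2}, and that the additional conditions $(BC)$ are genuinely recovered rather than assumed, exactly as in the divergence proof.
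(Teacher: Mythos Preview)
Your proposal is correct and follows essentially the same approach as the paper's proof: symmetry and non-negativity via \eqref{gf2} combined with $(GWBC)_2$ and $(BC)$, surjectivity of $\lambda I+A_2$ via Lax--Milgram on $H^2_{\frac{1}{a}}(0,1)$ with the same bilinear form (the paper calls it $M$ and the norm $\|\cdot\|_\triangle$), the identical bootstrap to recover $(BC)$ and $(GWBC)_2$, and the conclusion through Theorems \ref{aggiunto} and \ref{densità}. The only slip is the second equality in your displayed expansion of $\langle A_2u,v\rangle_{Y_\mu}$, which drops the $\gamma_j$ contributions and gets the sign at $j=1$ wrong, but your surrounding text describes the cancellation mechanism correctly.
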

\begin{proof}
	First of all, we prove the symmetry and non negativity of $A_2$.
	
	\underline{$A_2$ is symmetric}: let $u,v\in D_w(A_2)$; then $(u,v)\in \mathcal{W_\mu}(0,1)\times H^2_{\frac{1}{a}}(0,1)$ and, by (\ref{gf2}), we have
	\begin{equation*}
	\begin{split}
	\left\langle A_2u, v \right\rangle_{Y_\mu}
	&=\int_{0}^{1}\frac{a u'''' v}{a}\,dx + \frac{a(0) u''''(0) v(0)}{\beta_0} + \frac{a(1) u''''(1) v(1)}{\beta_1} \\
	&=[u'''v]^{x=1}_{x=0}-[u''v']^{x=1}_{x=0}+\int_{0}^{1}u''v''dx+\frac{v(0)}{\beta_0}(\beta_0u'''(0)-\gamma_0u(0))\\
&-\frac{v(1)}{\beta_1}\biggl (\beta_1 u'''(1) +\gamma_1u(1)\biggr ) \\
&=\int_{0}^{1}u''v''dx-\frac{\gamma_0}{\beta_0}v(0)u(0)-\frac{\gamma_1}{\beta_1}v(1)u(1) \\
	&=\left\langle u, A_2v \right\rangle_{Y_\mu}.
	\end{split}
	\end{equation*}
	\indent
	\underline{$A_2$ is non negative}: for any $u\in D_w(A_2)$, according to the previous calculations, one has
	\begin{equation*}
	\left\langle A_2u, u \right\rangle_{Y_\mu}= \int_{0}^{1}|u''|^2dx-\frac{\gamma_0}{\beta_0}|u(0)|^2-\frac{\gamma_1}{\beta_1}|u(1)|^2 \ge 0.
	\end{equation*}
	\indent
	Finally, we prove that $\lambda I+A_2$ is surjective for sufficiently large $\lambda \in \R$. 
	
	\underline{$\lambda I+A_2$ is surjective}: consider the space $H^2_{\frac{1}{a}}(0,1)$ with the inner product 
	\[(u,v)_2:= \langle u, v\rangle_{Y_\mu} + \langle u'', v''\rangle_{L^2(0,1)} \quad\,\,\,\,\,\, \forall \; u,v \in H^2_{\frac{1}{a}}(0,1),\]
	which induces the norm
	\[
	\|u\|^2_{\triangle} := \|u\|^2_{Y_\mu} + \|u''\|^2_{L^2(0,1)} \quad\,\,\,\,\,\, \forall \; u \in H^2_{\frac{1}{a}}(0,1).
	\]
Thus, thanks to (\ref{stima su modulo di u(x)}), the norm $\|\cdot\|_\triangle$ is equivalent to  $\norm{\cdot}_{H^2_{\frac{1}{a}}(0, 1)}$.
	
	Moreover,  
	$$
	H^2_{\frac{1}{a}}(0,1) \hookrightarrow Y_{\mu} \hookrightarrow (H^2_{\frac{1}{a}}(0,1))^*,
	$$
	where $(H^2_{\frac{1}{a}}(0,1))^*$ is the dual space of $H^2_{\frac{1}{a}}(0,1)$ with respect to $Y_{\mu}$. Let $f\in Y_{\mu}$ and define $F: H^2_{\frac{1}{a}}(0,1)\rightarrow \R$
	such that
	$$
	F(v) = \int_0^1 \frac{f v}{a}\, dx + \frac{f(0) v(0)}{\beta_0} + \frac{f(1) v(1)}{\beta_1}\quad\,\,\,\,\,\,\forall \; v\in H^2_{\frac{1}{a}}(0,1).
	$$
	From $H^2_{\frac{1}{a}}(0,1)\hookrightarrow Y_{\mu},$ it follows that $F\in (H^2_{\frac{1}{a}}(0,1))^*.$ Now, define
	\[
	M(u,v) := \lambda\int_0^1 \frac{u v}{a}\,dx + \int_0^1 u''v''dx+ \frac{(\lambda-\gamma_1)}{\beta_1}u(1)v(1)+  \frac{(\lambda-\gamma_0)}{\beta_0}u(0)v(0),
	\]
	for all $u,v \in H^2_{\frac{1}{a}}(0,1)$.
	Clearly $M(u,v)$ is a continuous bilinear form. Moreover, it is coercive. Indeed, taking $\lambda > \gamma_i$, for $i=0,1$, and $\delta := \min\{\lambda,1, \lambda-\gamma_0, \lambda-\gamma_1\}$,
	\[
	M(u,u) = \lambda \int_0^1 \frac{u^2}{a}\,dx + \int_0^1 (u'')^2dx+ \frac{(\lambda-\gamma_1)}{\beta_1}u^2(1)+  \frac{(\lambda-\gamma_0)}{\beta_0}u^2(0)\ge \delta \|u\|_\triangle^2,
	\]
    for all $u \in H^2_{\frac{1}{a}}(0,1)$.
	As a consequence, by the Lax-Milgram Theorem, there exists a unique $u\in H^2_{\frac{1}{a}}(0,1)$ such that
	$$
	M(u,v)= F(v)
	$$
	for any $v\in H^2_{\frac{1}{a}}(0,1)$. Hence, the previous equality can be rewritten as
	\begin{equation}\label{w2}
		\begin{aligned}
			&\int_0^1 u'' v''dx + \frac{(\lambda-\gamma_0)}{\beta_0} u(0)v(0)+ \frac{(\lambda-\gamma_1)}{\beta_1}u(1)v(1)\\
			&=\int_0^1 \frac{f-\lambda u}{a}\, v\, dx + \frac{f(0) v(0)}{\beta_0} + \frac{f(1) v(1)}{\beta_1}.
		\end{aligned}
	\end{equation}
	In particular, \eqref{w2} holds  for all $v\in C^{\infty}_{c}\left(0,1\right) \subseteq  H^2_{\frac{1}{a}}(0,1)$. 
	Hence
	\[
	\int_0^1 u'' v''dx =\int_0^1 \frac{f-\lambda u}{a}\, v\, dx.
	\]
	In other words, the second distributional derivative of $u''$ is equal to $\frac{f-\lambda u}{a}$ a.e. in $(0,1)$. Hence, using the fact that $f-\lambda u \in Y_\mu \subseteq L^2_{\frac{1}{a}}(0,1)$, one has that  $au'''' \in Y_\mu$. Thus $u \in \mathcal W_\mu(0,1)$.
	
	Now, we come back to \eqref{w2}. Using
	\eqref{gf2}, \eqref{w2} becomes
	\begin{equation}\label{weq3}
		\begin{aligned}
			&\int_{0}^{1}u''''v\,dx-[u'''v]^{x=1}_{x=0}+ [u''v']^{x=1}_{x=0}-\frac{\gamma_0}{\beta_0}u(0)v(0) -\frac{\gamma_1}{\beta_1}u(1)v(1)\\
			& = \int_0^1 \frac{f-\lambda u}{a}\,v \, dx\, + \frac{(f - \lambda u)(0)v(0)}{\beta_0}+\frac{(f - \lambda u)(1)v(1)}{\beta_1}
		\end{aligned}
	\end{equation}
	for all $v\in H^2_{\frac{1}{a}}(0,1)$.
	Thanks to the fact that $au'''' = f-\lambda u $ a.e. in $(0,1)$, \eqref{weq3} becomes
	\begin{equation}\label{bt2}
		\begin{aligned}
			&-[u'''v]^{x=1}_{x=0}+ [u''v']^{x=1}_{x=0}-\frac{\gamma_0}{\beta_0}u(0)v(0) -\frac{\gamma_1}{\beta_1}u(1)v(1)\\
			& = \frac{(f - \lambda u)(0)}{\beta_0}v(0)+\frac{(f - \lambda u)(1)}{\beta_1}v(1),
		\end{aligned}
	\end{equation}
	for all $v\in H^2_{\frac{1}{a}}(0,1)$. We have immediately that
	\[
	u''(0)=u''(1)=0.
	\]
	Thus \eqref{bt2} becomes
	\[
	-[u'''v](1)+ [u'''v](0)-\frac{\gamma_0}{\beta_0}u(0)v(0) -\frac{\gamma_1}{\beta_1}u(1)v(1)=\frac{(au'''')(0)}{\beta_0}v(0)+ \frac{(au'''')(1)}{\beta_1}v(1)
	\]
	for all $v\in H^2_{\frac{1}{a}}(0,1)$. Hence $u\in H^2_{\frac{1}{a}}(0,1)$, $au''''\in Y_\mu$, u satisfies $(BC)$ and
	\[
	A_2u(j)+(-1)^{j+1}\beta_ju'''(j)+\gamma_ju(j)=0,\quad j=0,1.\]
	This implies that $u\in D_w(A_2)$. In other words $\lambda I+A_2$ is surjective for $\lambda$ sufficiently large.

	Thanks to Theorems \ref{aggiunto} and \ref{densità} (see Section \ref{Section 5 appendix}), we know that $-A_2$ is self-adjoint with upper bound $0$, has dense domain and $(-A_2, D_w(A_2))$ generates a contraction semigroup.
\end{proof}

As a consequence of Theorem \ref{teorema generazione semigruppo caso non div}, one has that the problem
	\begin{equation}\label{problema non div}
		\begin{cases}
			u_t(t,x)+A_2u(t,x)=h(t,x), &(t,x)\in (0,T) \times (0,1),\\
			A_2u(t,j)+(-1)^{j+1}\beta_ju_{xxx}(t,j)+\gamma_ju(t,j)=0,& t\in (0,T), j=0,1,\\
			u_{xx}(t,0)=u_{xx}(t,1)=0,&t\in (0,T),\\
			u(0,x)=u_0(x),&x\in(0,1),
		\end{cases}
	\end{equation}
	is well posed in the following sense.

\begin{Definition}
	If $u_0 \in Y_\mu$ and $h\in L^2(0,T;Y_\mu)$, a function $u$ is said to be a weak solution of \eqref{problema non div} if
	\begin{equation*}
		u\in \mathcal{C}\Bigl([0,T];Y_\mu\Bigr )\cap L^2\Bigl (0,T;H^2_{\frac{1}{a}}(0, 1) \Bigr )
	\end{equation*}
	and
	\begin{equation*}
		\begin{split}
			&	\int_0^1\frac{u(T,x)\varphi(T,x)}{a(x)}\,dx-\int_0^1\frac{u_0(x)\varphi(0,x)}{a(x)}\,dx-\int_{(0,T)\times (0,1)}\frac{u(t,x)\varphi_t(t,x)}{a(x)}\,dx\,dt\\
			&+\frac{u(T,1)\varphi(T,1)}{\beta_1}-\frac{u_0(1)\varphi(0,1)}{\beta_1}-\frac{1}{\beta_1}\int_0^Tu(t,1)\varphi_t(t,1)dt\\
			&+\frac{u(T,0)\varphi(T,0)}{\beta_0}-\frac{u_0(0)\varphi(0,0)}{\beta_0}-\frac{1}{\beta_0}\int_0^Tu(t,0)\varphi_t(t,0)dt\\
			&=- \int_{(0,T)\times (0,1)}u_{xx}(t,x)\varphi_{xx}(t,x)\,dx\,dt-\frac{\gamma_1}{\beta_1}\int_0^Tu(t,1)\varphi(t,1)dt\\
			&-\frac{\gamma_0}{\beta_0}\int_0^Tu(t,0)\varphi(t,0)dt+\int_{(0,T)\times (0,1)}h(t,x)\frac{\varphi(t,x)}{a(x)}\,dx\,dt\\
			&+\int_0^T\frac{h(t,1)\varphi(t,1)}{\beta_1}dt+\int_0^T\frac{h(t,0)\varphi(t,0)}{\beta_0}dt
		\end{split}
	\end{equation*}
	for all $\varphi\in H^1(0,T;Y_\mu)\cap L^2(0,T;H^2_{\frac{1}{a}}(0, 1))$.
\end{Definition}
In particular, the following well posedness theorem holds.
\begin{Theorem}\label{teo4.2}
		For all $h \in L^2(0,T;Y_\mu)$ and $u_0\in Y_\mu$, there exists a unique solution
	\begin{equation*}
		u\in \mathcal{C}\Bigl([0,T];Y_\mu\Bigr )\cap L^2\Bigl (0,T;H^2_{\frac{1}{a}}(0, 1) \Bigr )
	\end{equation*}
	of \eqref{problema non div} such that
	\begin{equation*}
		\sup_{t\in [0,T]}\Vert u(t)\Vert^2_{Y_\mu}+\int_0^T\Vert u(t)\Vert^2_{H^2_{\frac{1}{a}}(0, 1)}dt\le C_T\Bigl (\Vert u_0\Vert^2_{Y_\mu}+\Vert h\Vert^2_{L^2(0,T;L^2_{\frac{1}{a}}(0,1))} \Bigr )
	\end{equation*}
	for some positive constant $C_T$. Moreover, if $h\in W^{1,1}(0,T;L^2_{\frac{1}{a}}(0,1))$ and $u_0\in D_w(A_2)$, then
	\begin{equation*}
		u\in \mathcal{C}^1\Bigl([0,T];Y_\mu\Bigr )\cap \mathcal{C}\Bigl ([0,T];D_w(A_2) \Bigr ).
	\end{equation*}
\end{Theorem}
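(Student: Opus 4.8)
The plan is to deduce everything from the generation result of Theorem~\ref{teorema generazione semigruppo caso non div} together with a single energy estimate, exactly in the spirit of the divergence case treated in Theorem~\ref{Teorema stime caso div debole}. Since $(-A_2,D_w(A_2))$ generates a contraction semigroup on $Y_\mu$, the existence and uniqueness of the weak solution for $u_0\in Y_\mu$ and $h\in L^2(0,T;Y_\mu)$, as well as the regularity $u\in\mathcal C^1([0,T];Y_\mu)\cap\mathcal C([0,T];D_w(A_2))$ when $u_0\in D_w(A_2)$ and $h\in W^{1,1}(0,T;L^2_{\frac1a}(0,1))$, follow verbatim from the abstract semigroup references already invoked for Theorem~\ref{Teorema stime caso div debole}. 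Hence the only point left to prove is the a priori estimate.

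First I would fix $u_0\in D_w(A_2)$, so that the corresponding solution is the strong one, $u\in\mathcal C^1([0,T];Y_\mu)\cap\mathcal C([0,T];D_w(A_2))$; this legitimates differentiating $t\mapsto\|u(t)\|^2_{Y_\mu}$. Taking the inner product of the equation $u_t+A_2u=h$ with $u$ in $Y_\mu$ and reusing the computation carried out in the non-negativity step of Theorem~\ref{teorema generazione semigruppo caso non div} (which rests on the Green formula~\eqref{gf2} and on the boundary conditions $(GWBC)_2$ and $(BC)$ encoded in $D_w(A_2)$), one obtains
\[
\frac12\frac{d}{dt}\|u(t)\|^2_{Y_\mu}+\|u_{xx}(t)\|^2_{L^2(0,1)}-\frac{\gamma_0}{\beta_0}u^2(t,0)-\frac{\gamma_1}{\beta_1}u^2(t,1)=\langle h(t),u(t)\rangle_{Y_\mu}.
\]

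Since $\beta_j>0$ and $\gamma_j\le0$, the two boundary terms are non-negative and may be discarded; bounding the right-hand side by Cauchy--Schwarz and Young, $\langle h,u\rangle_{Y_\mu}\le\frac12\|u\|^2_{Y_\mu}+\frac12\|h\|^2_{Y_\mu}$, yields the differential inequality $\frac{d}{dt}\|u(t)\|^2_{Y_\mu}\le\|u(t)\|^2_{Y_\mu}+\|h(t)\|^2_{Y_\mu}$. Gronwall's Lemma then gives the $\sup_{t}\|u(t)\|^2_{Y_\mu}$ bound, and integrating the identity over $(0,T)$ while retaining the term $\|u_{xx}(t)\|^2_{L^2(0,1)}$ controls $\int_0^T\|u_{xx}(t)\|^2_{L^2(0,1)}dt$ in terms of the data. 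To upgrade this to the stated $\int_0^T\|u(t)\|^2_{H^2_{\frac1a}(0,1)}dt$ bound I would invoke the equivalence of $\|\cdot\|_\triangle$ with $\|\cdot\|_{H^2_{\frac1a}(0,1)}$ established in the proof of Theorem~\ref{teorema generazione semigruppo caso non div}, which turns $\|u\|^2_{Y_\mu}+\|u_{xx}\|^2_{L^2(0,1)}$ into the full $H^2_{\frac1a}$ norm. A density argument, using that $D_w(A_2)$ is dense in $Y_\mu$ and in $H^2_{\frac1a}(0,1)$, finally extends the estimate to arbitrary $u_0\in Y_\mu$.

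I expect the main difficulty here to be bookkeeping rather than a genuine obstruction, precisely because the degeneracy disappears from the energy identity: the pairing $\langle A_2u,u\rangle_{Y_\mu}$ cancels the weight $a$ of $A_2u=au''''$ against the weight $\frac1a$ of the $Y_\mu$ inner product, and \eqref{gf2} then replaces $\int_0^1 u''''u\,dx$ by the unweighted $\int_0^1(u'')^2dx$. Thus the singularity of $\frac1a$ at $x_0$, which was the source of difficulty in the surjectivity argument, is harmless for the estimate. The one point requiring care is the right-hand side norm: the pairing naturally produces $\|h\|_{Y_\mu}$, including the boundary contributions $\frac{|h(j)|^2}{\beta_j}$, so to recover exactly $\|h\|_{L^2(0,T;L^2_{\frac1a}(0,1))}$ one should either absorb those boundary terms via the trace bound~\eqref{stima su modulo di u(x)} or, as done in the divergence case, keep the full $Y_\mu$ norm of $h$ on the right-hand side.
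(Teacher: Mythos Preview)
Your proposal is correct and follows essentially the same route the paper uses (implicitly, by analogy with the proof of Theorem~\ref{Teorema stime caso div debole}): abstract semigroup results for existence, uniqueness and regularity, then the energy identity obtained by pairing with $u$ in $Y_\mu$, dropping the non-negative boundary terms, Gronwall, and density. Your remark about the right-hand side norm is a genuine and careful observation: the natural estimate produces $\|h\|_{L^2(0,T;Y_\mu)}$ (as in the divergence case), and the appearance of $\|h\|_{L^2(0,T;L^2_{\frac1a}(0,1))}$ in the statement is at best a cosmetic discrepancy.
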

\vspace{0.5cm} 


\section{Appendix}\label{Section 5 appendix}
In this last section we just give some important results that we need for the previous proofs. These results are well known, anyway we write them in order to make the paper self-contained. 
\begin{Proposition}[see \cite{nagel}, Chapter 2.3, page 90]
A linear operator $A$ on the real Hilbert space $H$ is dissipative if and only if
\[
\langle Au,u \rangle_H \le 0
\]
for all $u \in D(A)$.
\end{Proposition}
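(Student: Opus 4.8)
The plan is to work directly from the defining inequality of dissipativity, namely that $A$ is dissipative on $H$ precisely when $\|\lambda u - Au\|_H \ge \lambda \|u\|_H$ for every $\lambda > 0$ and every $u \in D(A)$. The whole argument then rests on a single Hilbert-space computation: since the norm of $H$ is induced by the inner product, I would expand
\[
\|\lambda u - Au\|_H^2 = \lambda^2\|u\|_H^2 - 2\lambda\langle Au,u\rangle_H + \|Au\|_H^2
\]
and compare the right-hand side with $\lambda^2\|u\|_H^2$.

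For the implication ``$\langle Au,u\rangle_H \le 0 \Rightarrow A$ dissipative'', I would observe that if $\langle Au,u\rangle_H \le 0$, then the middle term $-2\lambda\langle Au,u\rangle_H$ is non negative and the last term $\|Au\|_H^2$ is non negative as well, so the displayed identity yields $\|\lambda u - Au\|_H^2 \ge \lambda^2\|u\|_H^2$ at once; taking square roots gives the dissipativity inequality for all $\lambda>0$.

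For the converse, I would start from the dissipativity inequality squared, cancel the common term $\lambda^2\|u\|_H^2$, and arrive at $2\lambda\langle Au,u\rangle_H \le \|Au\|_H^2$ for every $\lambda>0$. Dividing by $2\lambda$ gives $\langle Au,u\rangle_H \le \|Au\|_H^2/(2\lambda)$, and letting $\lambda\to+\infty$ forces $\langle Au,u\rangle_H \le 0$. There is no genuine obstacle here: the only point worth flagging is that the equivalence is special to the Hilbert (inner product) setting, where the duality map reduces to the identity, so that the abstract dissipativity condition collapses to the single scalar inequality $\langle Au,u\rangle_H \le 0$; the limit $\lambda\to+\infty$ is precisely what removes the $\|Au\|_H^2$ term that would otherwise obstruct the conclusion.
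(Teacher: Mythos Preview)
Your argument is correct and is the standard proof of this classical fact. Note, however, that the paper does not supply its own proof of this proposition: it is stated in the Appendix as a well-known result cited from \cite{nagel}, so there is nothing to compare against beyond observing that your computation is exactly the one underlying the reference.
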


\begin{Definition}
A linear operator $A$ is {\it bounded above} in the Hilbert space $H$ if there exists $\omega \in \R$ such that
\[
\langle Au,u \rangle_H \le \omega \langle u, u \rangle_H
\]
for all $u \in D(A)$. In this case $\omega$ is called an upper bound of $A$.
\end{Definition}
Thus, by the previous proposition, we have that if $A$ is dissipative on the real Hilbert space $H$, then it is bounded above with upper bound $0$. Moreover the following theorem holds:
\begin{Theorem}[see \cite{1}, Theorem B.14]\label{aggiunto}
Let $A$ be a linear operator on the Hilbert space $H$ and let $\omega \in \R$. The following assertions are equivalent:
\begin{enumerate}
\item $A$ is self-adjoint with upper bound $\omega$
\item \begin{enumerate} \item $\langle Au,v \rangle_H =\langle v,Au \rangle_H$ for all $u, v \in D(A)$,
\item $\langle Au, u \rangle_H \le \omega \langle u,u \rangle_H$ for all $u \in D(A)$,
\item there exists $\lambda >\omega$ such that $(\lambda I-A)$ is surjective in $H$.
\end{enumerate}
\end{enumerate}
\end{Theorem}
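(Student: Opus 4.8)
The plan is to collapse both implications onto a single classical fact about non-negative symmetric operators by means of the shift $B := \omega I - A$. Since self-adjointness and the symmetric inclusion $A \subseteq A^*$ both presuppose a dense domain, I take $D(A)$ dense throughout, so that $A^*$ and $B^*$ are well defined (in the paper density is in any case supplied separately). Under hypothesis (2)(b), or under (1), the operator $B$ is symmetric and non-negative, because $\langle Bu,u\rangle_H = \omega\|u\|^2 - \langle Au,u\rangle_H \ge 0$ for every $u \in D(A) = D(B)$. Writing $\mu := \lambda - \omega$, the surjectivity statement (2)(c) — and, in the other direction, the surjectivity I must produce — is exactly the surjectivity of $\mu I + B$ for some $\mu > 0$. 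Thus everything rests on the lemma that a densely defined symmetric non-negative operator $B$ for which $\mu I + B$ is surjective for one $\mu > 0$ is self-adjoint, and conversely.

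For $(1)\Rightarrow(2)$ I would argue as follows. Self-adjointness yields the symmetry (2)(a), and ``upper bound $\omega$'' is precisely (2)(b). For (2)(c) I fix any $\lambda>\omega$ and show that $\mu I + B$ (with $\mu=\lambda-\omega>0$) is bijective, using that $B$ is self-adjoint and $\ge 0$. The coercivity $\langle(\mu I + B)u,u\rangle_H \ge \mu\|u\|^2$ combined with Cauchy--Schwarz gives $\|(\mu I + B)u\| \ge \mu\|u\|$, so $\mu I + B$ is injective with closed range; and since $B=B^*$ we have $\operatorname{Ker}\big((\mu I + B)^*\big)=\operatorname{Ker}(\mu I + B)=\{0\}$, whence the range, being closed and dense, equals $H$. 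In particular $\lambda I - A = \mu I + B$ is surjective.

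The core of the argument is $(2)\Rightarrow(1)$. Assuming (a),(b),(c), and keeping $B,\mu$ as above, the same bounded-below estimate $\|(\mu I + B)u\| \ge \mu\|u\|$ follows from (b) via Cauchy--Schwarz and gives injectivity; together with (c) this makes $\mu I + B : D(B)\to H$ a bijection. To upgrade $B\subseteq B^*$ to equality I would take an arbitrary $v\in D(B^*)$ and push it through this bijection: the vector $(\mu I + B^*)v$ lies in $H$, so by surjectivity there is $u\in D(B)$ with $(\mu I + B)u = (\mu I + B^*)v$. Because $B\subseteq B^*$ we have $(\mu I + B)u = (\mu I + B^*)u$, hence $(\mu I + B^*)(u-v)=0$. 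The decisive identity $\operatorname{Ker}\big((\mu I + B)^*\big)=\operatorname{Ran}(\mu I + B)^\perp = H^\perp = \{0\}$, valid precisely because (c) makes the range all of $H$, then forces $v=u\in D(B)$. Thus $D(B^*)\subseteq D(B)$ and $B=B^*$; consequently $A=\omega I - B$ is self-adjoint, and (b) certifies the upper bound $\omega$, giving (1).

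The main obstacle is exactly this last inclusion $D(A^*)\subseteq D(A)$: symmetry alone only delivers $A\subseteq A^*$, and what promotes it to equality is the interplay between the surjectivity hypothesis (c) and the adjoint relation $\operatorname{Ker}(T^*)=\operatorname{Ran}(T)^\perp$ applied to $T=\mu I + B$. The non-negativity coming from (b) plays the auxiliary but indispensable role of securing injectivity through $\|(\mu I + B)u\|\ge\mu\|u\|$, so that the preimage $u$ furnished by surjectivity is unique and genuinely sits in $D(B)$; without it the step $v=u$ would fail.
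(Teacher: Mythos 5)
Your proof is correct, but there is nothing in the paper to compare it against: the paper states Theorem \ref{aggiunto} without proof, citing \cite{1}, Theorem B.14, so yours is the only argument on the table. What you give is the standard deficiency-type argument: after the shift $B=\omega I-A$, the lower bound $\|(\mu I+B)u\|\ge \mu\|u\|$ obtained from non-negativity plus Cauchy--Schwarz, and the identity $\operatorname{Ker}\bigl((\mu I+B)^*\bigr)=\operatorname{Ran}(\mu I+B)^{\perp}$, which together with the surjectivity hypothesis force $D(B^*)\subseteq D(B)$ and hence $B=B^*$; you have correctly identified that this last inclusion is the crux, symmetry alone giving only $B\subseteq B^*$. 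Two small points should be tightened, neither of which is a gap in substance. First, in $(1)\Rightarrow(2)$ you assert that $\mu I+B$ is ``injective with closed range''; the lower bound alone yields a closed range only because $B$, being self-adjoint, is closed (so $\mu I+B$ is closed, and a closed operator bounded below has closed range) --- for a non-closed operator this step would fail, so the closedness should be said explicitly. Second, your standing density assumption can actually be dispensed with in the direction $(2)\Rightarrow(1)$: condition (b) makes $A-\omega I$ dissipative and (c) makes $\lambda I-A=(\lambda-\omega)I-(A-\omega I)$ surjective with $\lambda-\omega>0$, so density of $D(A)$ already follows from the paper's Theorem \ref{densità}, a Hilbert space being reflexive; this is precisely the order in which the paper deploys the two results in the proofs of Theorems \ref{Teorema generazione caso div debole} and \ref{teorema generazione semigruppo caso non div} (density and generation from Theorem \ref{densità}, self-adjointness with upper bound $0$ from Theorem \ref{aggiunto}). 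With the closedness of $B$ made explicit, your argument is complete.
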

Finally, we recall the following generation results.
\begin{Theorem}[\cite{nagel}, Chapter 2.3, page 91]\label{generatore}
A self-adjoint operator $(A,D(A))$ on a Hilbert space $H$ generates  a strongly continuous semigroup (of self-adjoint operators) if and only if it is bounded above.
\end{Theorem}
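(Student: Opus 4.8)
The statement is an equivalence, so the plan is to treat the two implications separately, both of them resting on a single spectral bridge: for a self-adjoint operator $A$ on $H$, being bounded above with upper bound $\omega$ is the same as having $\sigma(A)\subseteq(-\infty,\omega]$. Indeed, by the spectral theorem one may write $A=\int_{\R}\lambda\,dE(\lambda)$, so that $\langle Au,u\rangle_H=\int_{\R}\lambda\,d\langle E(\lambda)u,u\rangle_H$ for every $u\in D(A)$; this quantity is bounded by $\omega\|u\|_H^2$ for all such $u$ exactly when the spectral measure is carried by $(-\infty,\omega]$. I would record this equivalence first and then invoke it in both directions.

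\textbf{Sufficiency.} Suppose $A$ is self-adjoint and bounded above with upper bound $\omega$, and set $B:=A-\omega I$. Then $B$ is again self-adjoint, and $\langle Bu,u\rangle_H=\langle Au,u\rangle_H-\omega\|u\|_H^2\le0$, so by the characterization of dissipativity recalled above $B$ is dissipative. The substantive step is the range condition of the Lumer--Phillips theorem. For any $\lambda>0$ one has
\[
\|(\lambda I-B)u\|_H^2=\lambda^2\|u\|_H^2-2\lambda\langle Bu,u\rangle_H+\|Bu\|_H^2\ge\lambda^2\|u\|_H^2,
\]
so $\lambda I-B$ is injective with closed range; since $B=B^*$, the orthogonal complement of the range of $\lambda I-B$ equals $\ker(\lambda I-B)=\{0\}$, whence the range is dense and therefore all of $H$. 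By Lumer--Phillips $B$ generates a contraction semigroup $(S(t))_{t\ge0}$, and then $T(t):=e^{\omega t}S(t)$ is the strongly continuous semigroup generated by $A$; each $T(t)$ is self-adjoint because $B=B^*$ forces $S(t)=S(t)^*$.

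\textbf{Necessity.} Conversely, assume $A$ is self-adjoint and generates a strongly continuous semigroup. By the Hille--Yosida theorem there exists $\omega\in\R$ with $(\omega,+\infty)\subseteq\rho(A)$. Self-adjointness gives $\sigma(A)\subseteq\R$, hence $\sigma(A)\subseteq(-\infty,\omega]$, and the spectral bridge of the first paragraph then yields $\langle Au,u\rangle_H\le\omega\|u\|_H^2$ for all $u\in D(A)$; that is, $A$ is bounded above. Alternatively, once the growth bound $\|T(t)\|\le e^{\omega t}$ is in hand for the self-adjoint semigroup, the same conclusion follows by differentiating $t\mapsto\|T(t)u\|_H^2$ at $t=0$.

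\textbf{Main obstacle.} I expect the only genuinely analytic point to be the surjectivity of $\lambda I-B$ in the sufficiency part: everything else is bookkeeping with standard generation theorems and the spectral calculus. It is precisely here that self-adjointness is indispensable, since it converts the a priori estimate above, which by itself yields only injectivity and closed range, into surjectivity by identifying the orthogonal complement of the range with the trivial kernel.
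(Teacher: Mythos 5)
Your proof is correct, but note that the paper offers no proof of its own to compare against: Theorem \ref{generatore} is quoted from \cite{nagel} (Chapter II.3) and used as a black box, so the relevant comparison is with the textbook argument. Engel--Nagel prove it by diagonalization: the spectral theorem represents the self-adjoint $A$ as a multiplication operator by a real measurable function $q$ on some $L^2(\Omega,\mu)$; boundedness above becomes $\esssup q\le\omega$, and the semigroup is exhibited explicitly as multiplication by $e^{tq}$, which makes strong continuity, self-adjointness of each $T(t)$, and the sharp bound $\Vert T(t)\Vert=e^{t\esssup q}$ (i.e.\ with constant $M=1$) all immediate. You instead assemble the statement from generation theorems: Lumer--Phillips for $B=A-\omega I$, with surjectivity of $\lambda I-B$ obtained from the estimate $\Vert(\lambda I-B)u\Vert\ge\lambda\Vert u\Vert$ together with the duality $\mathrm{Ran}(\lambda I-B)^{\perp}=\ker(\lambda I-B^{*})=\ker(\lambda I-B)=\{0\}$, and Hille--Yosida plus the spectral calculus for necessity. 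This is a genuinely different and equally valid route, and arguably the more instructive one in context: your surjectivity step is the abstract template for the Lax--Milgram arguments in Theorems \ref{Teorema generazione caso div debole} and \ref{teorema generazione semigruppo caso non div}, and your packaging of symmetry, numerical-range bound and surjectivity is precisely the content of Theorem \ref{aggiunto}; what the multiplication-operator proof buys instead is the explicit semigroup formula and the self-adjointness of each $T(t)$ without any appeal to uniqueness of the adjoint semigroup's generator. Two details worth making explicit in your write-up: the closed-range claim uses that $B$ is closed (true, since self-adjoint operators are closed, but it should be said); and in your alternative necessity argument the bound $\Vert T(t)\Vert\le e^{\omega t}$ with constant $1$ is not what Hille--Yosida delivers directly, though for a semigroup of self-adjoint operators it follows from $\Vert T(t)\Vert=\Vert T(t/2)\Vert^{2}$ --- your primary route via $(\omega,\infty)\subseteq\rho(A)$ sidesteps this entirely. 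Finally, since the paper works over real scalars, the spectral-theoretic steps implicitly pass to the complexification, which the paper itself licenses.
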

\begin{Theorem}[\cite{nagel}, Corollary 3.20]\label{densità}
Let $(A,D(A))$ be a dissipative operator on a reflexive Banach space such that $\lambda I-A$ is surjective for some $\lambda >0$. Then $A$ is densely defined and generates a contraction semigroup.
\end{Theorem}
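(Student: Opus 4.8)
The plan is to read this as the Lumer--Phillips generation theorem on a reflexive space, in which the only point going beyond the standard dissipative resolvent calculus is that reflexivity forces the domain to be dense for free. I would organize the argument in four steps: elementary resolvent estimates, closedness together with $(0,\infty)\subseteq\rho(A)$, density, and finally generation.

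First I would record the consequences of dissipativity. By definition, for every $\lambda>0$ and every $x\in D(A)$ one has $\|(\lambda I-A)x\|\ge\lambda\|x\|$, so $\lambda I-A$ is injective and its inverse on the range is bounded by $1/\lambda$. Coupling this with the hypothesis that $\lambda_0 I-A$ is surjective for some $\lambda_0>0$, the map $\lambda_0 I-A$ becomes a bijection of $D(A)$ onto $X$ whose inverse $R(\lambda_0,A)$ is bounded and everywhere defined; a bounded everywhere-defined operator is closed, hence $\lambda_0 I-A$, and thus $A$, is closed. I would then upgrade the single value $\lambda_0$ to the whole half-line by a connectedness argument: the set $\Lambda:=\{\lambda>0:\lambda\in\rho(A)\}$ is nonempty, open (openness of the resolvent set), and relatively closed in $(0,\infty)$, because the a priori bound $\|R(\lambda,A)\|\le 1/\lambda$ guarantees a resolvent ball of radius at least $\lambda$ about each $\lambda\in\Lambda$; therefore $\Lambda=(0,\infty)$ and $\|R(\lambda,A)\|\le1/\lambda$ for all $\lambda>0$.

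The heart of the matter is the density of $D(A)$, and this is the step I expect to be the main obstacle, since it is precisely where reflexivity is indispensable. I would argue by contradiction via Hahn--Banach: were $D(A)$ not dense, there would be $x^*\in X^*$, $x^*\neq 0$, vanishing on $D(A)$. Fix an arbitrary $x\in X$ and set $y_n:=R(n,A)x\in D(A)$. Then $\|y_n\|\le\|x\|/n\to 0$, so $y_n\to 0$ strongly, while $Ay_n=nR(n,A)x-x$ has norm at most $2\|x\|$, hence is bounded. Using reflexivity, the bounded sequence $\{Ay_n\}$ admits a weakly convergent subsequence $Ay_{n_k}\rightharpoonup w$. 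The graph of $A$ is a norm-closed subspace of $X\times X$, hence convex and norm-closed, hence weakly closed by Mazur's theorem; since $(y_{n_k},Ay_{n_k})\rightharpoonup(0,w)$ weakly in $X\times X$, the limit lies in the graph, forcing $w=A0=0$. Thus $n_kR(n_k,A)x-x=Ay_{n_k}\rightharpoonup 0$, i.e.\ $n_kR(n_k,A)x\rightharpoonup x$. As each $n_kR(n_k,A)x\in D(A)$, we get $\langle x,x^*\rangle=\lim_k\langle n_kR(n_k,A)x,x^*\rangle=0$, and since $x\in X$ was arbitrary this yields $x^*=0$, a contradiction. Hence $D(A)$ is dense.

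Finally, having assembled that $A$ is closed, densely defined, with $(0,\infty)\subseteq\rho(A)$ and $\|R(\lambda,A)\|\le 1/\lambda$, I would invoke the Hille--Yosida characterization of contraction semigroups, which is now applicable precisely because the domain has been shown to be dense; equivalently, one may cite the Lumer--Phillips theorem, whose standing density assumption is exactly what the previous step supplied. Either route gives that $A$ generates a strongly continuous contraction semigroup, completing the proof.
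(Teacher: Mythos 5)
Your proof is correct. The paper does not actually prove this theorem: it is stated in the appendix purely as a known result, quoted with the citation to Engel--Nagel (Corollary 3.20), so there is no internal proof to compare against --- but your argument is precisely the standard one behind that citation: dissipativity gives $\|(\lambda I-A)x\|\ge\lambda\|x\|$, hence closedness of $A$ and $\|R(\lambda,A)\|\le 1/\lambda$; a connectedness argument upgrades the single surjectivity value to $(0,\infty)\subseteq\rho(A)$; and the reflexivity step (bounded $Ay_n$, weakly convergent subsequence, graph norm-closed and convex hence weakly closed by Mazur, so $\lambda_k R(\lambda_k,A)x\rightharpoonup x$ and Hahn--Banach forces density of $D(A)$) is exactly the textbook mechanism, after which Hille--Yosida, or equivalently Lumer--Phillips, finishes the generation claim.
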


\end{document}